\documentclass[10pt]{article}

\usepackage{url}
\usepackage{mathtools}
\usepackage{amssymb}
\usepackage{amsthm}
\usepackage{empheq}
\usepackage{latexsym}
\usepackage{enumitem}
\usepackage{eurosym}
\usepackage{dsfont}
\usepackage{appendix}
\usepackage{color} 
\usepackage[unicode]{hyperref}
\usepackage{frcursive}
\usepackage[utf8]{inputenc}
\usepackage[T1]{fontenc}
\usepackage{geometry}
\usepackage{multirow}
\usepackage{todonotes}
\usepackage{lmodern}
\usepackage{anyfontsize}
\usepackage{stmaryrd}
\usepackage{natbib}
\usepackage{cleveref}

\setcitestyle{numbers,open={[},close={]}}

\definecolor{red}{rgb}{0.7,0.15,0.15}
\definecolor{green}{rgb}{0,0.5,0}
\definecolor{blue}{rgb}{0,0,0.7}
\hypersetup{colorlinks, linkcolor={red},citecolor={green}, urlcolor={blue}}
			
\makeatletter \@addtoreset{equation}{section}

\newtheorem{theorem}{Theorem}[section]

\newtheorem{lemma}[theorem]{Lemma}
\newtheorem{proposition}[theorem]{Proposition}

\newtheorem{definition}[theorem]{Definition}

\DeclareUnicodeCharacter{014D}{\=o}
\def \E{\mathbb{E}}

\def \P{\mathbb{P}}

\def \R{\mathbb{R}}

\def\Ac{{\cal A}}

\def\Fc{{\cal F}}

\def\1{\mathbf{1}}
\def\dd{\mathrm{d}}

\def \proof{{\noindent \bf Proof.\quad}}
\def \ep{\hbox{ }\hfill{ ${\cal t}$~\hspace{-5.1mm}~${\cal u}$}}

\def\b*{\begin{eqnarray*}}
\def\e*{\end{eqnarray*}}

\def\be{\begin{eqnarray}}
\def\ee{\end{eqnarray}}

\begin{document}


\title{\bf Optimal resource allocation for maintaining system solvency}
\author{Gaoyue Guo\thanks{Université Paris-Saclay CentraleSupélec, Laboratoire MICS and CNRS FR-3487, gaoyue.guo@centralesupelec.fr. This work was supported by the grant ANR-25-CE40-0714 (MATH-SPA).}
            \and 
            Nizar Touzi\thanks{New York University, 
            Tandon School of Engineering, nizar.touzi@nyu.edu. This author is partially supported by NSF grant No. DMS-2508581 and the Chair Finance and Sustainable Development.}
            \and 
            Wenpin Tang\thanks{Columbia University, IEOR, wt2319@columbia.edu. This author is supported by NSF CAREER Award DMS-2538791, the Tang Family Assistant
Professorship and a Columbia-CityU/HK collaborative project that is supported by InnoHK
Initiative, The Government of the HKSAR and the AIFT Lab.}
            }

\date{\today}

\maketitle

\begin{abstract}
We study two optimal allocation problems for a system of independent Brownian agents whose states evolve under a limited shared control. At each time, a unit of resource can be divided and allocated across components to increase their drifts, with the objective of maximizing either (i) the probability that all components avoid ruin, or (ii) the expected number of components that avoid ruin. 
We identify drift thresholds separating trivial and nontrivial regimes,
and derive the associated Hamilton--Jacobi--Bellman equations on the positive orthant with mixed boundary conditions at the absorbing boundary and at infinity.
We also establish the existence, uniqueness, and regularity of a bounded classical solution and a verification theorem linking the PDE to the stochastic control value function. 
Finally, we prove a conjecture on the optimality of a socialistic allocation rule -- the push-the-laggard strategy.
It is optimal for the all-survive value function, while it is suboptimal for the count-survivors criterion.
\end{abstract}

\noindent {\bf Keywords:} stochastic control, HJB equation, recursive boundary conditions, resource allocation.

\section{Introduction}\label{sec:introduction}

We study a stochastic allocation problem for a system of independent Brownian agents whose states evolve under a shared control budget.
An agent is annihilated when her state level hits zero, 
while the planner can distribute one unit of resource per unit time across the agents to increase their state levels.
The objective is to allocate the resource dynamically so as to maximize long-run survival. 
Two natural criteria lead to qualitatively different control problems:
\begin{enumerate}
\item[(1)]
maximize the probability that all agents survive forever;
\item[(2)]
maximize the expected number of agents that survive forever. 
\end{enumerate}
These two objectives capture, respectively, a system-wide robustness criterion and an aggregate survival criterion.

Many real-world allocation problems share such a common tension: 
a planner can distribute a limited resource among several units whose situations evolve randomly, and the planner’s objective may reflect either efficiency, meaning maximizing the number of units that do well, or robustness, meaning avoiding catastrophic failure of the system as a whole.
A vivid example comes from emergency departments: 
hospitals allocate scarce resources such as physician time, ICU beds, ventilators, and diagnostic capacity to a finite number of patients, each with uncertain health trajectory.
Operations research models \cite{Ahsan2019EDreview, Duma2023ED} emphasize that decisions must be made in real time under uncertainty, and that different prioritization rules can strongly affect system performance and patient outcomes.
From a normative viewpoint, triage guidelines balance several ethical principles \cite{Zeneli2021TriageValues}, such as ``save the most lives'', ``protect the worst-off'', and ``treat people equally'', and these principles may conflict in scarce-resource settings \cite{Persad2009Scarce}.

A second viewpoint comes from social welfare design: should a planner concentrate effort on those closest to failure, following a maximin or Rawlsian intuition \cite{PatraoNeves2022EthicalCriteria}, or instead allocate effort where it yields the greatest aggregate benefit, following a more utilitarian logic?
This question is closely related to an interpretation proposed by McKean and Shepp \cite{MS2006},
who analyzed competing allocation principles within a stochastic framework for distributing resources across (two) companies.
They compared two distinct government tax policies for corporations: 
the republican policy gives tax breaks to the richer companies,
while the democratic policy gives tax breaks to the weaker companies in the hope of keeping them alive and thereby reducing unemployment.

A central (and natural) question is:
\begin{center}
{\it Does allocating resources to the weakest agent/entity yield the highest marginal benefit?}
\end{center}
Such an allocation rule is referred to as the {\em push-the-laggard strategy} \cite{TT2018}. 
In the emergency department analogy, this resembles a sickest-first rule; in a social welfare analogy, it resembles a prioritize-the-least-advantaged rule.
Our main goal is to show that the answer to the above question depends sharply on the objective: 
prioritizing the weakest is optimal for a stringent system-survival criterion, but may fail for a maximize-the-number-of-survivors criterion,
extending \cite{MS2006} for any finite number of agents/entities.

Technically, we analyze this problem through its Hamilton--Jacobi--Bellman (HJB) equation on the positive orthant, supplemented with absorbing boundary conditions at zero and recursive asymptotic conditions at infinity. 
Our first result identifies drift thresholds distinguishing between trivial and nontrivial regimes. 
In the nontrivial regime, we solve the delicate question of identifying the boundary condition at infinity. 
We prove the existence, uniqueness, and regularity of bounded classical solutions, and establish a verification theorem linking the value function and the HJB equation.
Our second result reveals a sharp dichotomy in the structure of optimal controls: the push-the-laggard strategy is optimal for the all-survive criterion, but fails in general for the survivor-count criterion.  
We prove these results by fine analysis of the HJB equation near the origin, which is the corner of the positive orthant.


\medskip
\noindent
{\bf Related literature}: 
The push-the-laggard strategy is an instance of the rank-dependent diffusions,
which have been extensively studied in the literature, see e.g., \cite{BB22, BB24, BFK05, CDSS19, IKS13, JR13, PP08, ST17}.
It was proved to be the optimal drift control for the all-survive criterion \cite{MS2006},
and be suboptimal for the survivor-count criterion \cite{Grandits2019RuinBM} when $n = 2$.
When $n \longrightarrow \infty$, \cite{Aldous, TT2018} showed that the push-the-laggard strategy is asymptotically optimal for the survivor-count criterion.
See also \cite{Harrison2025DriftControlRBM} for recent work on drift control in high dimensions (i.e., $n$ is large) for operations management problems.
Besides, rank-dependent diffusions appeared in the study of stochastic games \cite{GTX22}
and control of McKean-Vlasov dynamics \cite{BHJ25, BGTZ} in the context of financial systemic risk.
Recently, \cite{AT24, BBE23} studied the connection between rank-dependent diffusions and load balancing in queueing theory.

\medskip
\noindent
{\bf Organization of the paper}: 
Section \ref{sec:pb} introduces the stochastic control problems, the push-the-laggard strategy, and the associated HJB equation, and Section \ref{sec:main} states the main results.
Section \ref{sec:largeparticles} proves the recursive boundary conditions at infinity, including the capped survivor-count recursion needed for $U^n$ when $b>-1$.
Section \ref{sec:proof-wellposed} establishes the HJB characterization, regularity, uniqueness, and verification result for $V^n$ and for the capped values $U^{n,r}$.
We then prove that the push-the-laggard strategy fails for the survivor-count criterion in Section \ref{sec:conjectureU}, and that it is optimal for the all-survive criterion in Section \ref{sec:conjectureV}.
The appendix contains the auxiliary one-dimensional Brownian estimates used in the large-particle and verification arguments.

\section{Problem formulation and properties}\label{sec:pb}

\subsection{The save-the-most and save-all resource allocation problems}\label{ssec:pb}

We consider two allocation problems formulated on some filtered probability space $(\Omega, \mathcal F, \mathbb F=(\mathcal F_t)_{t\ge 0}, \mathbb P)$ satisfying the usual conditions and supporting a standard $\mathbb F$-Brownian motion $W=(W^1,\ldots,W^n)$ on $\R^n$, for some $n\ge 1$.  Denote by $\mathcal A_n$ the collection of $\mathbb F$-progressively measurable processes $\Phi=\big(\phi_t=(\phi^1_t,\ldots, \phi^n_t)\big)_{t\ge 0}$ taking values in the simplex $A_n$ of $\R^n$:
$$
\phi_t\in A_n:=\{a\in\R_+^n: \mathbf{1}^{\!n}\cdot a \le 1\},
~~\mbox{for all}~~t\ge 0,
$$
where $\mathbf{1}^{\!n}\in \mathbb R^n$ stands for the unit vector with components $\mathbf{1}^n_i=1$, $i=1,\ldots,n$. 

Given some fixed parameter $b\in\R$, we define for all $\Phi\in\mathcal A_n$ the controlled process $X^{\Phi}=\big(X^\Phi_t=(X^{\Phi,1}_t,\ldots, X^{\Phi,n}_t)\big)_{t\ge 0}$ by
$$
\dd X^{\Phi}_t:=(b\mathbf{1}^{\!n}+\phi_t)\,\dd t+ \dd W_t,~~ t\ge 0,
$$
and we introduce the hitting time
$$
\tau^\Phi_i:=\inf\big\{t\ge 0:\, X^{\Phi,i}_t\le 0\big\},\qquad i=1,\ldots, n.
$$
We interpret $X^{\Phi}_t$ as the vector process of reserve level of the system units taking values in the positive orthant $\R_+^n$. The interior domain 
$$E_n:=(0,\infty)^n
$$ 
represents the solvency region of the system, while  absorption at $0$ of any entry of $X$ represents failure of the corresponding particle, such as death, default, or bankruptcy. The control $\phi^i_t$ interprets as the instantaneous assistance allocated to unit $i$ by the regulator, subject to a unit total budget\footnote{All the results presented below remain valid if the unit budget is replaced by any positive budget.}.  

Throughout this paper, we consider the following stochastic control problems: 
\begin{eqnarray}\label{def:VU}
V^n(x)
:=
\sup_{\Phi\in \mathcal A_n}\, \mathbb E_x\Big[\prod_{i=1}^n {\mathds 1}_{\{\tau_i^\Phi=\infty\}} \Big], 
~~\mbox{and}~~
U^n(x)
:=
\sup_{\Phi\in \mathcal A_n}\, \mathbb E_x\Big[\sum_{i=1}^n {\mathds 1}_{\{\tau_i^\Phi=\infty\}}\Big]. 
\end{eqnarray}
with $\mathbb E_x:=\mathbb E[\cdot|X^\Phi_0=x]$ denoting the expectation operator conditional on the state process starting at the position $x\in \mathbb R^n_+$.
The functional $V^n$ corresponds to a system-reliability objective: maximize the probability that every unit survives forever. The functional $U^n$ corresponds to a survivor-count objective: maximize the expected number of units that survive forever. These two objectives mirror, respectively, a no-one-left-behind policy and a save-as-many-as-possible policy in triage ethics \cite{Persad2009Scarce,Zeneli2021TriageValues}.

We observe that the problem becomes trivial for $n=1$, as the unit process $\phi^{1}\equiv 1\in \Ac_1$ is an optimal strategy.  Hence $V^1(z)=U^1(z)=\mathbb P[z+ (1+b)t + W^1_t >0,\, \forall t\ge 0]$ can be explicitly expressed as
\begin{equation}\label{def:n=1}
V^1(z)=U^1(z)=H_{b+1}(z),\qquad \mbox{where } H_{a}(z):=1-\mathrm{e}^{-2a^+z}
~~\mbox{for all}~z\ge 0,
\end{equation}
 with $a^+: = \max (a,0)$.

Our main objective in the paper is to provide a complete characterization of optimal allocation strategies in the non-trivial case $n\ge 2$. We shall adopt the notations
$$ 
\max(x):=\max\{x_1,\ldots, x_n\}
~~\mbox{and}~~
\min(x):=\min\{x_1,\ldots, x_n\},
~~\mbox{for all}~~
x=(x_1,\ldots, x_n)\in\mathbb R^n.
$$
Moreover, for any $I\subset\{1,\ldots,n\}$ with cardinality $|I|$ and with complement set $I^c:=\{1,\ldots,n\}\setminus I$, we shall denote throughout this paper $(x^{I}, x^{-I}):=x$,  where
\begin{equation}\label{notations}
x^{I}:=(x_j)_{j\in I}\in \mathbb R_+^{|I|},
~~, x^{-I}:=x^{I^c}\in \mathbb R_+^{|I^c|},
~~ \mathbf{0}(x):=\{i:x_i=0\},
~~x^{-0}:=x^{-\mathbf{0}(x)}\in E_{|\mathbf{0}(x)^c|}. 
\end{equation}
A natural conjecture is that, whenever one coordinate is strictly smaller than another, the value function is steeper in the corresponding direction, meaning that the marginal value of assistance is larger for the weakest unit. Equivalently, an optimal feedback would allocate effort to the smallest coordinate. This conjecture formalizes the question whether a prioritize-the-worst-off policy is optimal.

\begin{definition}\label{def:push}
The push-the-laggard strategy $\overline{\Phi}$ is defined by the tie-breaking rule:  
\[
\overline\phi^i_t:={\mathds 1}_{\{ i=I(X_t) \}},~t\ge 0,\qquad
~\text{with} ~I(x):=\min\{1\le j \le n: x_j=\min(x^{-0}) \}
~\mbox{for all}~x\in\R_+^n.
\]
\end{definition}
The induced system of SDEs $X^{\bar\Phi}$ has rank-dependent drifts and corresponds to an Atlas-type model whose existence and uniqueness are guaranteed by standard results in rank-based diffusions, see e.g.   \cite{IKS13, PP08, zvonkin1974transformation}. 
Our objective in this paper is to examine whether the following statement holds for either problems $U^n$ and $V^n$. 

\paragraph*{\rm\bf Conjecture 1:} {\it The push-the-laggard strategy $\overline{\Phi}$ is optimal.}

\vspace{5mm}
The above Conjecture 1 has been studied in the previous literature in the setting $b=0$:
\begin{itemize}
\item For $n=2$, McKean and Shepp \cite{MS2006} derived the explicit expression
$$
V^2(x)=1-\mathrm{e}^{-2\min(x)}-2\min(x)\mathrm{e}^{-\mathbf{1}^{\!2}\cdot x},
$$
which proves that Conjecture 1 holds for $V^2$ by direct verification. Moreover, they presented numerical evidence that the conjecture fails for $U^2$, which was  shown later by Grandits \cite{Grandits2019RuinBM}.  See also Grandits \cite{Grandits2025Ruin2D} for the heterogeneous case with $n=2$.

\item When $n\longrightarrow\infty$, Tang and Tsai \cite{TT2018} proved that Conjecture 1 is ``asymptotically true'' for $U^n$.
\end{itemize}
The parameter $b$, which was not addressed in the previous literature, will be shown to play a substantial role in the analysis of the corresponding HJB equation and in the study of the conjecture.

\subsection{The HJB equation}
\label{sec:HJBverif}

Our analysis builds on the PDE characterization of the control problems \eqref{def:VU}, which involves the Hamiltonian
\begin{equation}\label{def:H}
H(z):=\sup_{a\in A_n}a\cdot z=\max(z),
~\mbox{with maximizer}~
\hat a_i(z):=\mathds 1_{\{\max(z)=z_i>z_j \mbox{\tiny \color{blue} for all } j<i\}},
~~i=1,\ldots,n,
\end{equation}
where we set by convention $z_0=0$. Then, by standard control theory,  the HJB equation corresponding to both problems $V^n$ and $U^n$ can be written in terms of the Hamiltonian  as:
\begin{equation}\label{eq:hjb}
\frac{1}{2}\Delta w + b\, \mathbf{1}^{\!n}\!\cdot\! \nabla w + \max(\nabla w) = 0
~~\text{on}~~E_n.
\end{equation}
Of course, this equation needs to be complemented with appropriate boundary and growth conditions in order to provide a complete characterization of the value functions $V^n$ and $U^n$. This crucial aspect will be studied in the next subsection. Modulo this characterization, it is well-known from standard optimal control theory that the optimal strategies for both problems $V^n$ and $U^n$ are defined as the maximizers of the Hamiltonian $H$ introduced in \eqref{def:H}. Consequently, using the notations of Definition \ref{def:push}, the optimality of the push-the-laggard strategy of Conjecture 1 rewrites as follows.

\paragraph*{\rm\bf Conjecture 2:} {\it $w\in C^1(E_n)$ and $\max\big(\nabla w(x)\big)= \partial_{x_{_{{\!I\!(\!x\!)}}}}w(x)$, for all $x\in E_n$.}

\subsection{First  properties}

In this paragraph, we {\color{blue} provide} two simple results which will be used throughout this paper.

\begin{proposition}\label{prop:firststeps}
The maps $U^n,V^n$ are symmetric, componentwise nondecreasing on $\mathbb R^n_+$, and:

\begin{itemize}
\item [{\rm (i)}] $\prod_{i=1}^n H_{b+\frac1n}(x_i)\le V^n(x)\le U^n(x) \le \sum_{i=1}^n H_{b+1}(x_i)$, for all $x\in\R_+^n$;

\vspace{2mm}
\item [{\rm (ii)}] $\big|V^n(x)-V^n(y)\big|\vee\big|U^n(x)-U^n(y)\big|
\le n H_{b+1}(|x-y|_\infty)\le 2n (b+1)^+|x-y|_\infty$,  for all $x,y\in\R_+^n$. In particular $V^n$ and $U^n$ are Lipschitz on $\R_+^n$.
\end{itemize}
\end{proposition}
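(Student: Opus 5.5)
Symmetry follows from exchangeability: if $\sigma$ is a permutation of coordinates, then $\Phi\mapsto\sigma\Phi$ is a bijection of $\mathcal A_n$. Since $W$ and $\sigma W$ have the same law, $X^{\sigma\Phi}$ started at $\sigma x$ has the law of $\sigma X^{\Phi}$ started at $x$. Both criteria are permutation invariant, so $V^n(\sigma x)=V^n(x)$ and likewise for $U^n$.

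For monotonicity, take $y\ge x$ componentwise and any $\Phi$. I will use the same control $\Phi$ from $y$, viewed as a progressively measurable functional of the driving noise. Then $X^{\Phi}_t(y)=X^{\Phi}_t(x)+(y-x)$ pathwise, since the drift depends only on $\phi_t$, which is identical for both starting points. Hence $\tau_i^\Phi(y)\ge\tau_i^\Phi(x)$ for every $i$, both payoffs are larger from $y$, and taking suprema gives the claim. The point to handle carefully is that the control is a process and not a feedback of the state, so shifting the initial condition leaves $\phi$ unchanged. This is automatic in the strong formulation on the fixed filtered space.

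For (i), the middle inequality is immediate because $\prod_i\mathds 1_{\{\tau_i=\infty\}}\le\sum_i\mathds 1_{\{\tau_i=\infty\}}$. For the lower bound I take the constant control $\phi\equiv\frac1n\mathbf 1^{\!n}\in A_n$. The coordinates are then independent Brownian motions with drift $b+\frac1n$, so by \eqref{def:n=1} the all-survive probability factorizes as $\prod_i H_{b+1/n}(x_i)$. For the upper bound, since $\phi^i_t\le1$, comparison gives $X^{\Phi,i}_t\le x_i+(b+1)t+W^i_t$. Hence $\P[\tau_i^\Phi=\infty]\le H_{b+1}(x_i)$ for each $i$, and summing over $i$ gives the bound.

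For (ii), by symmetry of roles it suffices to bound $V^n(y)-V^n(x)$ from above, and the same argument handles $U^n$. Set $\delta=|x-y|_\infty$ and $\bar x=x+\delta\mathbf 1^{\!n}\ge y$; by monotonicity $V^n(y)\le V^n(\bar x)$. Fix $\Phi$ and run it from $\bar x$ and from $x$ on the same noise, so that $X(\bar x)=X(x)+\delta\mathbf 1^{\!n}$. The payoffs then differ only on the event where some coordinate $i$ has $\tau_i(x)<\infty$ but $\tau_i(\bar x)=\infty$. At $\tau_i(x)$ the $\bar x$-path sits at level $\delta$, and afterwards its drift is at most $b+1$. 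By the strong Markov property, the conditional probability that it never hits zero is at most $H_{b+1}(\delta)$. A union bound over $i$ gives $\E[\text{payoff}(\bar x)]-\E[\text{payoff}(x)]\le nH_{b+1}(\delta)$. For $U^n$ the difference of sums is bounded termwise in the same way. Taking the supremum over $\Phi$ yields $V^n(\bar x)-V^n(x)\le nH_{b+1}(\delta)$, which gives the first inequality. The second follows from $1-e^{-u}\le u$, which gives $H_{b+1}(\delta)\le2(b+1)^+\delta$.

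The main obstacle is making the strong Markov step in (ii) rigorous with a general progressively measurable control, which can use the past after $\tau_i(x)$. I will avoid this by pathwise comparison. After $\tau_i(x)$, the process $X^{\Phi,i}(\bar x)$ is dominated by $\delta+(b+1)(t-\tau_i)+W^i_t-W^i_{\tau_i}$. The increment $W^i_{\tau_i+\cdot}-W^i_{\tau_i}$ is a Brownian motion independent of $\mathcal F_{\tau_i}$, so the estimate holds regardless of the control.
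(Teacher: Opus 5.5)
Your arguments for monotonicity, (i) and (ii) are the paper's. You use the constant control $\frac1n\mathbf 1^{\!n}$ for the lower bound and the comparison $X^{\Phi,i}_t\le x_i+(b+1)t+W^i_t$ for the upper bound. In (ii) you use the same synchronous coupling from $x$ and $x+\delta\mathbf 1^{\!n}$, with the payoff difference dominated by $\sum_i\mathds 1_{\{\tau_i<\infty,\,\tau_i^\delta=\infty\}}$. Your reduction $V^n(y)\le V^n(x+\delta\mathbf 1^{\!n})$ via monotonicity, and your pathwise domination by $\delta+(b+1)(t-\tau_i)+W^i_t-W^i_{\tau_i}$ after $\tau_i$, spell out what the paper leaves implicit, and both are correct.

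The one step that does not hold as written is symmetry. The process $\sigma X^\Phi$ started at $\sigma x$ is driven by the noise $\sigma W$ with control $\sigma\Phi$. By contrast, $X^{\sigma\Phi}$ started at $\sigma x$ is driven by $W$ with the same control. Nothing forces these to have the same law once the control depends on the noise.

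For example, take $n=2$, $\sigma$ the transposition, $\phi^1_t=\mathds 1_{\{W^1_t<0\}}$ and $\phi^2\equiv0$. The two coordinates of $\sigma X^\Phi$ are independent. Those of $X^{\sigma\Phi}$ are not: the drift of the second coordinate is a functional of $W^1$, which drives the first coordinate.

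The fix is to relabel the noise inside the control as well. For $\Phi=F(W)$ with $F$ non-anticipative, use $\Psi:=\sigma F(\sigma^{-1}W)\in\mathcal A_n$. Since $\sigma^{-1}W$ is again a standard Brownian motion, $(\Psi,W)$ has the same law as $(\sigma\Phi,\sigma W)$. Hence $X^\Psi$ started at $\sigma x$ has the law of $\sigma X^\Phi$ started at $x$, and by permutation invariance of the payoffs you get $V^n(\sigma x)\ge V^n(x)$. Applying this to $\sigma^{-1}$ gives equality, and likewise for $U^n$.

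If $\mathbb F$ carries randomness beyond $W$, you additionally need the standard fact that the value does not depend on the probabilistic set-up. The paper simply asserts symmetry as following from the definition.
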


\begin{proof}
(i) The symmetry and the component-wise monotonicity, together with the inequalities $0\le V^n\le U^n$, follow from the definition of  $V^n$ and $U^n$. To complete the proof of (i), we observe that $X_t^{\Phi,i}=x_i+\int_0^t (b+\phi_s^i)\,\dd s+W_t^i
\le x_i+(b+1)t+W_t^i,$ for all $t\ge 0$, $i=1,\ldots,n$, and all admissible control $\Phi\in\mathcal A_n$. Then $\{\tau_i^\Phi=\infty\}
\subset
Q_i:=\big\{x_i+(b+1)t+W_t^i>0,\ \forall t\ge 0\big\}$, and therefore 
\b*
\mathbb E_x\Big[\sum_{i=1}^n {\mathds 1}_{\{\tau_i^\Phi=\infty\}}\Big]
&\le&
\sum_{i=1}^n\mathbb P_x[Q_i]
=
\sum_{i=1}^n H_{b+1}(x_i), 
\e* 
by the independence of the events $Q_i$ inherited from that of the Brownian motions $W^i$. 
Taking the supremum over $\Phi\in\mathcal A_n$ yields the required upper bounds for  $U^n$. On the hand, using the constant control $\Phi^0=(\frac1n,\ldots,\frac1n)\in\mathcal A_n$, it follows from the independence of the $W^i$'s that $$
V^n(x)\ge
\prod_{i=1}^n
\mathbb P_x\Big[x_i+(b+\frac1n)t+W_t^i>0,\ \forall t\ge 0\Big]
=
\prod_{i=1}^n H_{b+\frac1n}(x_i).
$$
(ii) For $w$ denoting either \(V^n\) or \(U^n\), the required estimate is implied by the following claim:
\begin{equation}\label{eq:uniform-continuity-value}
0\le w(x+\delta\mathbf 1^{\!n})-w(x)
\le n H_{b+1}(\delta),
~\text{for all}~x\in\mathbb R_+^n~\text{and}~\delta>0,
\end{equation}
that we now prove. For any admissible control \(\Phi\in\mathcal A_n\),  consider two
processes driven by the same Brownian motion and the same control, one starting
from \(x\), the other from \(x+\delta\mathbf 1^{\!n}\), so that \(X_t^{\delta,i}=X_t^i+\delta
\) for $i=1,\ldots,n$.  Denote the corresponding
hitting times by \(\tau_i\) and \(\tau_i^\delta\). 
A straightforward verification yields
\begin{equation}\label{ineq:cont}
\max\Big(\prod_{i=1}^n\mathds 1_{\{\tau_i^\delta=\infty\}}
-
\prod_{i=1}^n\mathds 1_{\{\tau_i=\infty\}}, 
\sum_{i=1}^n\mathds 1_{\{\tau_i^\delta=\infty\}}
-
\sum_{i=1}^n\mathds 1_{\{\tau_i=\infty\}}\Big)
\le
\sum_{i=1}^n
\mathds 1_{\{\tau_i<\infty,\ \tau_i^\delta=\infty\}}.
\end{equation}
For each \(i\), notice that the event \(\{\tau_i<\infty,\ \tau_i^\delta=\infty\}\)
can occur only if, after the time \(\tau_i\), the \(i\)-th shifted coordinate,
which is then equal to \(\delta\), survives forever. Since its drift is bounded
above by \(b+1\), the strong Markov property and comparison with a Brownian
motion with constant drift \(b+1\) provide
\(
\mathbb P_x[\tau_i<\infty,\ \tau_i^\delta=\infty]\le H_{b+1}(\delta)
\). Taking expectations in \eqref{ineq:cont} and plugging the last inequality induces the required \eqref{eq:uniform-continuity-value}.
\ep
\end{proof}

\section{Triviality in the small drift setting}

In this section, we show that both problems have zero value function when the drift parameter $b$ is sufficiently small. 
As a consequence, the main results of this paper  that justify the validity of Conjectures 1 and 2 will be proved outside of this small drift setting.

\begin{proposition}\label{prop:roughpro-intro}
The maps $U^n$ and $V^n$ vanish in the following situations:

\vspace{3mm}
\hspace{-2mm} {\rm (i)} $V^n=0$ on $E_n$ if and only if $b\le -\frac{1}{n}$;

\vspace{3mm}
\hspace{-2mm} {\rm (ii)} $U^n=0$ on $E_n$ if and only if $b\le -1$;
\end{proposition}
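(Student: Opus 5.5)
The "if" directions will come from Proposition \ref{prop:firststeps}, and the "only if" directions from an averaging argument on the total allocated drift.

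\emph{Sufficiency for $U^n$ and necessity for $U^n$.} If $b\le -1$, then $H_{b+1}\equiv 0$. The upper bound $U^n(x)\le\sum_i H_{b+1}(x_i)$ of Proposition \ref{prop:firststeps}(i) then gives $U^n=0$, and hence $V^n=0$ as well. Conversely, if $b>-1$, we use the admissible control $\phi\equiv(1,0,\dots,0)$. Under it, coordinate $1$ survives forever with probability $H_{b+1}(x_1)>0$ on $E_n$, so $U^n(x)>0$.

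\emph{Sufficiency and necessity for $V^n$.} If $b>-\frac1n$, the lower bound $\prod_i H_{b+\frac1n}(x_i)>0$ on $E_n$ from Proposition \ref{prop:firststeps}(i) gives $V^n>0$. The substantive part is to show $V^n=0$ when $b\le-\frac1n$, for every admissible $\Phi$. The plan is to consider the sum
\[
S_t:=\sum_{i=1}^n X^{\Phi,i}_t=\mathbf 1^n\cdot x+\int_0^t\big(nb+\mathbf 1^n\cdot\phi_s\big)\dd s+\sqrt n\,B_t,
\]
with $B:=n^{-1/2}\sum_i W^i$ a standard Brownian motion. This sum has drift at most $nb+1\le 0$. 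On the event $\{\tau_i^\Phi=\infty$ for all $i\}$ we have $S_t>0$ for all $t$. However, $S_t\le \mathbf 1^n\cdot x+(nb+1)t+\sqrt n B_t$, and since $nb+1\le0$ the right-hand side is a Brownian motion with nonpositive drift. By recurrence (when $nb+1=0$) or transience to $-\infty$ (when $nb+1<0$), it hits $0$ almost surely. Hence $\P[\text{all survive}]=0$ for every $\Phi$, and so $V^n=0$.

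One subtlety needs care. The comparison $S_t\le\cdots$ uses $\mathbf 1^n\cdot\phi_s\le1$ only before any absorption. The event of interest is precisely the event that no absorption occurs, so on it the dynamics are unchanged for all $t$, and the pathwise comparison holds there. I therefore expect no real obstacle. The only point to state carefully is the critical case $b=-\frac1n$, which needs recurrence of driftless Brownian motion rather than strict negative drift; this is handled by the same argument.
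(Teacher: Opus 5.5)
Your proof is correct and follows the paper's argument essentially verbatim. For $U^n$ you use the bounds of Proposition \ref{prop:firststeps}(i) and the constant control $(1,0,\dots,0)$; for $V^n$ you compare $S_t=\mathbf 1^{n}\cdot X^\Phi_t$ with a Brownian motion of nonpositive drift, where the paper simply drops the drift and compares with $\mathbf 1^{n}\cdot x+\mathbf 1^{n}\cdot W_t$. Two cosmetic remarks: your opening sentence assigns the directions incorrectly (in (i) it is the ``if'' direction $b\le-\frac1n\Rightarrow V^n=0$ that needs the sum argument), and the absorption subtlety is moot, since $\dd X^\Phi_t=(b\mathbf 1^{n}+\phi_t)\dd t+\dd W_t$ with $\phi_t\in A_n$ holds for all $t$ by definition.
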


\begin{proof}
(i) By the lower bound on $V^n$ in Proposition \ref{prop:firststeps} (i), we see that $b>-\frac1n$ implies that $V^n>0$. In the alternative case $b\le -\frac1n$, we note that for an arbitrary $\Phi\in\mathcal A_n$, we have $nb+\mathbf{1}^{\!n}\cdot \phi_t\le 0$ and therefore $S_t:=\mathbf{1}^{\!n}\cdot X_t^{\Phi}\le \mathbf{1}^{\!n}\cdot x+\mathbf{1}^{\!n}\cdot W_t$, and  $\mathbb P_x[S_t>0,\ \forall t\ge 0]=0$. As $\{\min_{i\le n}\tau_i^\Phi=\infty\} \subseteq \{S_t>0,\ \forall t\ge 0\}$, it follows that $
\mathbb P_x\!\left[\min_{i\le n}\tau_i^\Phi=\infty\right]=0$.
Since $\Phi\in\mathcal A_n$ is arbitrary,  we conclude that $V^n=0$.

(ii) By Proposition \ref{prop:firststeps} (i), we see that $b\le -1$ implies that $H_{b+1}\equiv 0$, and hence, $U^n=0$. 
In the alternative case $b>-1$, observe that the constant control $\Phi=(1,0,\ldots, 0)\in\mathcal A_n$ induces the lower bound $
U^n(x)\ge
\mathbb P_x[x_1+(b+1)t+W_t^1>0,\ \forall t\ge 0]
=
H_{b+1}(x_1)>0$ for $x\in E_n$.
\ep
\end{proof}

\vspace{5mm}
 Next, we show that the trivial setting of zero value function in 
Proposition \ref{prop:roughpro-intro} can also be characterized by the HJB equation \eqref{eq:hjb} together with the obvious behavior at the boundary of the positive orthant:
\begin{equation}\label{boundarycond}
w(x)=\left\{\begin{array}{lcl}
             0 &\mbox{for}& V^n
             \\
             U^{n-|\mathbf{0}(x)|}(x^{-0}) &\mbox{for}& U^n,
             \end{array}
      \right.
~~~\text{for all}~x\in\partial E_n.
\end{equation}
We emphasize that the last boundary condition, only on $\partial E_n$, is not sufficient for the characterization of the value function for general values of the drift parameter $b$. Indeed $w\equiv 0$ is a solution to \eqref{eq:hjb} satisfying both boundary conditions in \eqref{boundarycond}, but, in view of Proposition \ref{prop:roughpro-intro} (ii), it is clearly different from the value functions $V^n$ and $U^n$ when  $b>-\frac1n$ and $b>-1$,  respectively.  

\begin{proposition}\label{prop:negativedrift}
Let $n\ge 2$.

\medskip

\noindent {\rm (i)} If $b\le -\frac1n$, then $V^n\equiv 0$ is the unique bounded classical solution to the HJB equation \eqref{eq:hjb} with boundary condition $w=0$ on $\partial E_n$. 

\medskip

\noindent {\rm (ii)} If $b\le -1$, then $U^n\equiv 0$ is the unique bounded classical solution to the HJB equation \eqref{eq:hjb} with boundary condition $w(x)=U^{n-|\mathbf{0}(x)|}(x^{-0})$ for all $x\in\partial E_n$.
\end{proposition}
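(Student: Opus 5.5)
Since $V^n\equiv 0$ and $U^n\equiv 0$ in these regimes by Proposition \ref{prop:roughpro-intro}, and $w\equiv 0$ trivially solves \eqref{eq:hjb}, only uniqueness has to be shown: any bounded classical solution $w$ (that is, $C^2$ on $E_n$ and continuous on $\R^n_+$) with the stated boundary condition must vanish identically. The plan is a verification argument on $[0,\tau]$, where $\tau:=\min_i\tau_i$. The point is that in both regimes $\tau<\infty$ a.s. under every admissible control.

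\emph{Boundary data and finiteness of $\tau$.} In case (ii), for every $m\le n-1$, Proposition \ref{prop:firststeps} (i) gives $0\le U^m\le \sum_i H_{b+1}(x_i)=0$ on $\R^m_+$, because $b+1\le 0$. With the convention $U^0=0$, the boundary condition therefore becomes $w=0$ on $\partial E_n$, as in case (i). Since $b\le -1\le -\frac1n$, case (ii) reduces to case (i), and it suffices to treat $b\le-\frac1n$ with $w=0$ on $\partial E_n$. As in the proof of Proposition \ref{prop:roughpro-intro} (i), $S_t=\mathbf 1^{\!n}\cdot X^\Phi_t\le \mathbf 1^{\!n}\cdot x+\mathbf 1^{\!n}\cdot W_t$ for every $\Phi\in\Ac_n$. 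Hence $S$ hits $0$ a.s., and so does some coordinate. Thus $\tau^\Phi<\infty$ a.s. and $X^\Phi_\tau\in\partial E_n$.

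\emph{Two-sided bound via Itô.} Let $\theta_k$ be the exit time of $X$ from $(1/k,k)^n$. On this set $\nabla w$ and $D^2w$ are bounded, so Itô's formula applies to $w(X_{t\wedge\theta_k})$ without a local-martingale issue. For an arbitrary control $\Phi$, the drift term $\frac12\Delta w+b\mathbf 1^{\!n}\cdot\nabla w+\phi\cdot\nabla w$ is $\le 0$ by \eqref{eq:hjb}. Hence $w(x)\ge \E_x[w(X_{t\wedge\theta_k})]$. Since $\theta_k\uparrow\tau$ (paths cannot escape to infinity in finite time), letting $k\to\infty$ and then $t\to\infty$ uses boundedness of $w$, continuity of $w$ up to $\partial E_n$, $\tau<\infty$ a.s., and dominated convergence. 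This gives $w(x)\ge \E_x[w(X_\tau)]=0$.

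For the reverse inequality I would use the feedback control $\hat\phi_t=\hat a(\nabla w(X_t))$ from \eqref{def:H}. It is a Borel function of the state because $\nabla w$ is continuous on $E_n$, and it attains equality in \eqref{eq:hjb}, so the drift term vanishes identically. The same limiting procedure then yields $w(x)=\E_x[w(X_\tau)]=0$.

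\emph{Main obstacle.} The main obstacle is the existence of the controlled process under this merely measurable feedback. The drift $b\mathbf 1^{\!n}+\hat a(\nabla w(\cdot))$ is bounded and measurable and the diffusion coefficient is the identity, so a (weak, in fact strong by Zvonkin--Veretennikov \cite{zvonkin1974transformation}) solution exists, e.g.\ by Girsanov. The upper bound $w(x)\le 0$ only involves the law of $X$ and the pathwise bound on $S$, and that bound holds for any drift valued in $b\mathbf 1^{\!n}+A_n$. So a weak solution suffices and the argument closes. If one prefers to avoid this, an alternative is to apply the comparison above to $w^-$-type perturbations. I would first try the feedback route, since it mirrors the verification theorem used later in the paper.
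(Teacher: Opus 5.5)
Your proof is correct and is essentially the paper's argument: It\^o's formula along arbitrary controls and along a measurable argmax feedback (well-posed by Zvonkin--Veretennikov), together with $\tau<\infty$ a.s.\ because $\mathbf 1^{\!n}\cdot X^\Phi$ is dominated by $\mathbf 1^{\!n}\cdot(x+W)$ when $b\le-\frac1n$, forces $w(x)=\E_x[w(X_\tau)]=0$. The differences are minor and in your favour: you settle (ii) directly from $0\le U^m\le\sum_i H_{b+1}(x_i)=0$ rather than by induction on $n$, and your localization on $(1/k,k)^n$ keeps $\nabla w$ and $D^2w$ bounded, which the paper's localization by $\sigma_K$ alone does not guarantee near $\partial E_n$.
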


\begin{proof}
Let $w\in C^2(E_n)\cap C(\mathbb R_+^n)$ be a bounded solution to the PDE \eqref{eq:hjb}. For an arbitrary strategy $\Phi\in\Ac_n$, we introduce the standard localization $\tau^\Phi_K
:=\tau^\Phi\wedge\sigma_K$,  where $\tau^\Phi:=\min_{1\le i\le n}\tau^\Phi_i
                          =\inf\{t>0:X^{\Phi}_t\not\in E_n\}$ and $\sigma_K
:=
\inf\{t>0:\max(X^\Phi_t)>K\}$.
By It\^o's formula, we obtain for all $T>0$:
\be
w(x)
&=&
\E_x\big[w\big(X^\Phi_{T\wedge\tau^\Phi_K}\big)\big]
-
\E_x\Big[\int_0^{T\wedge\tau^\Phi_K}
                \Big(\frac12\Delta w+(b\, \mathbf{1}^{\!n}+\Phi_t)\cdot\nabla w\Big)(X^\phi_t)\mathrm{d}t
    \Big] 
\nonumber \\
&\ge&
\E_x\big[w\big(X^\Phi_{T\wedge\tau^\Phi_K}\big)\big]
\underset{K\to\infty}{\longrightarrow}
\E_x\big[w\big(X^\Phi_{T\wedge\tau^\Phi}\big)\big]
=
\E_x\big[w\big(X^\Phi_{\tau^\Phi}\big)\mathds 1_{\{ \tau^\Phi\le T\}}
          +w\big(X^\Phi_{T}\big)\mathds 1_{\{ \tau^\Phi>T\}}
          \big]
\nonumber
\\ && \hspace{35mm}
\underset{T\to\infty}{\longrightarrow}
\E_x\big[w\big(X^\Phi_{\tau^\Phi}\big)\mathds 1_{\{ \tau^\Phi<\infty\}}
          +\limsup_{T\to\infty}w\big(X^\Phi_{T}\big)\mathds 1_{\{ \tau^\Phi=\infty\}}
          \big],
\label{verif0}
\ee
where the first inequality is due to the HJB equation satisfied by $w$, and the convergence is ensured by the boundedness of $w$ and $\lim_{K\to\infty}\sigma_K=\infty$. Notice that in both cases (i) and (ii), we have $b\le -\frac1n$, and therefore \(\hat S_t:=\1^{\!n}\cdot \hat X_t 
\le \1^{\!n}\cdot (x+W_t)\) as $\1^{\!n}\cdot (b\1^{\!n}+\hat \Phi_t)=nb+1\le 0$. Then,  \( \mathbb P_x[\tau^{\hat\Phi}=\infty] \le \mathbb P_x[\hat S_t>0,\, \forall t\ge 0] = 0\), and we deduce from \eqref{verif0} that
\be\label{verif1}
w(x)
&\ge&
\E_x\big[w\big(X^\Phi_{\tau^\Phi}\big)\mathds 1_{\{ \tau^\Phi<\infty\}}
          \big].
 \ee
To establish the equality, we consider the feedback control $\hat\Phi_t:={\mathds 1}_{\{ i=\hat{I}(\hat X_t)\}}$ with state process $\hat X_t=X^{\hat\Phi}_t$ solving the corresponding rank-dependent drift SDEs:
\begin{equation}\label{eq:optimalcontrol}
\mathrm{d} \hat{X}^i_t =\big(b+ {\mathds 1}_{\{ i=\hat{I}(\hat X_t)\}}\big) \mathrm{d} t+ \mathrm{d} W^i_t \mbox{ and } \hat I(x)\!:=\!\min\big\{1\!\le\! j \!\le\! n: \partial_{x_j}w(x)\!=\!\max (\nabla w(x))\big \};
~x\!\in\! E_n.
\end{equation} 
The well-posedness of this SDE is guaranteed by \cite[Theorem 4]{zvonkin1974transformation}, and we obtain by following  the same line of argument as in \eqref{verif1} that 
\be\label{verif2}
w(x)
=
\E_x\big[w\big(\hat X_{\tau^{\hat\Phi}}\big)
           \mathds 1_{\{ \tau^{\hat\Phi}<\infty\}}
          \big].
\ee
Now, if $w$ satisfies the boundary conditions $w|_{\partial E_n} =0$, then the last equality immediately implies that $w=0$ on $\R_+^n$. 
If $w(x)=U^{n-|\mathbf{0}(x)|}(x^{-0})$ for $x\in\partial E_n$, then we proceed by induction:
\begin{itemize}
\item For $n=1$, the claim holds as $U^1(x)=H_{b+1}(x)=0$; 
\item Assume that the claim holds for $1,\ldots, n-1$, then $w(x)=U^{n-|\mathbf{0}(x)|}(x^{-0})=0$ for $x\in\partial E_n$, and we again deduce from \eqref{verif2} that $w(x)=0$. 
 \ep
\end{itemize}
\end{proof}

\section{Main results}\label{sec:main}

From the preliminary analysis of the previous section, it appears clearly that the relevant
nontrivial regimes are different for the two objectives:
\be\label{setting:mainresult}
n\ge2,
~~
b>-\frac1n
~\mbox{for } V^n,
~~\mbox{and}~~ 
b>-1
~\mbox{for } U^n.
\ee
The distinction between these two thresholds is important for the survivor-count summarized in the large time behavior stated in \Cref{thm:HJB} below. Our main result is as follows.

\begin{theorem}\label{thm:conjecture12}
In the nontrivial regime setting of \eqref{setting:mainresult}:
\begin{itemize}
\item[{\rm (i)}] {\rm Conjectures 1} and {\rm 2} hold for $V^n$.
\item[{\rm (ii)}] {\rm Conjectures 1} and {\rm 2} do not hold for $U^n$.
\end{itemize}
\end{theorem}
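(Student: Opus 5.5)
The plan rests on the HJB characterization: in the regimes \eqref{setting:mainresult}, $V^n$ (and $U^n$, through the capped values $U^{n,r}$) is the unique bounded classical solution of \eqref{eq:hjb}, with the boundary condition \eqref{boundarycond} on $\partial E_n$ and the recursive condition at infinity. We also use the verification theorem, which says that the maximizer of $\max(\nabla w)$ is an optimal feedback. Both are established later (Section \ref{sec:proof-wellposed}) and are taken for granted here. With them, Conjecture 1 is equivalent to Conjecture 2, so the whole task is to locate where $\nabla w$ attains its maximum.

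\textbf{Part (i).}

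The first step uses symmetry. Fix $i\neq j$ and set $D=\partial_{x_i}V^n-\partial_{x_j}V^n$ on the half-domain $\{x_i<x_j\}\cap E_n$. We want $D\ge0$ there. Differentiating \eqref{eq:hjb} is not possible classically, since $\max$ is only Lipschitz. Instead we use a reflection argument. Let $\sigma$ swap coordinates $i$ and $j$, and put $g(x)=V^n(x)-V^n(\sigma x)$ on $\{x_i<x_j\}$. By symmetry $g=0$ on the diagonal $\{x_i=x_j\}$. On $\{x_i=0\}$ we have $g=0-V^n(\sigma x)\le0$. Since $V^n(\sigma x)=V^n(x)$ by symmetry, $g\equiv 0$ holds trivially, so this is the wrong object.

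The correct object is the directional derivative $D=(\partial_i-\partial_j)V^n$. It vanishes on the diagonal by symmetry. On the face $\{x_i=0\}$ with $x_j>0$ we have $V^n=0$, so $\partial_jV^n=0$ there while $\partial_iV^n\ge0$, giving $D\ge0$. At infinity the recursive boundary condition makes $D\to0$ or keeps it nonnegative.

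The second step is the comparison. Inside the domain, $D$ satisfies in the viscosity/distributional sense a linear equation $\frac12\Delta D+b\mathbf 1\cdot\nabla D+\hat a\cdot\nabla D=0$. Here $\hat a$ is the measurable optimal feedback, obtained by differentiating the HJB along the direction $e_i-e_j$, using that $\max$ is convex and that we differentiate a difference quotient. A maximum-principle argument, via the Feynman--Kac representation of $D$ along the optimal process stopped at the exit of the wedge, then gives $D\ge0$. Hence the smallest coordinate carries the largest partial derivative, and the lowest-index tie-break matches $I(x)$.

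The main obstacle is the corner analysis near the origin. There the faces $\{x_i=0\}$ and the diagonals meet, and $C^1$ regularity of $V^n$ up to the corner is needed to make the boundary values of $D$ meaningful. I would handle this with local barrier functions built from the products $\prod H_{b+1/n}$, combined with induction on $n$ using the lower-dimensional problems at infinity.

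\textbf{Part (ii).}

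The strategy is a counterexample rather than a general argument. It suffices to show $\max(\nabla U^n)\ne\partial_{I(x)}U^n$ at some $x\in E_n$. We take $x$ near the face where one coordinate is tiny and the others are equal.

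Near $x_1=0$, the boundary value is $U^{n-1}(x^{-1})$, which is strictly positive. Hence $\partial_1U^n$ stays bounded by an explicit quantity. Meanwhile, pushing a healthy particle gains roughly $\partial_jU^{n-1}$ plus a contribution from survival of the others.

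A first-order expansion in $x_1$, computed via the push-the-laggard value (a Feynman--Kac computation), should show that allocating to the laggard yields a strictly smaller generator value than allocating to another component at some point. For $n=2$ this reproduces Grandits' result. For general $n$ we reduce to it by placing $n-2$ particles very far away, using the recursion at infinity: $U^n\approx (n-2)+U^{2}$ in the appropriately capped sense, with care taken when $b\le 0$. By the verification theorem, this strict gap contradicts optimality of $\overline\Phi$.
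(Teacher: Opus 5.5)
Part (ii) of your proposal is missing the actual mechanism. You never show why $\partial_{x_2}U^2>\partial_{x_1}U^2$ somewhere in $\{x_1<x_2\}$, and none of your three ingredients supplies it:
\begin{itemize}
\item Bounding $\partial_{x_1}U^n$ near the face $\{x_1=0\}$ gives no sign. The available bound $\partial_{x_1}U^2\le H_{b+1}'(0)=2(b+1)$ is compatible with either ordering relative to $H_{b+1}'(x_2)$.
\item The normal derivative of the push-the-laggard value on that face is a global Dirichlet-to-Neumann quantity with no closed form. A ``first-order expansion in $x_1$'' therefore does not produce a sign.
\item Grandits' result only covers $b=0$.
\end{itemize}

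The idea you need is local at the corner. If the conjecture held, $u=U^2$ would solve $\frac12\Delta u+(b+1)\partial_{x_1}u+b\,\partial_{x_2}u=0$ in the wedge $\{0<x_1<x_2\}$. It would have Dirichlet data $H_{b+1}$ on $\{x_1=0\}$ and, by symmetry, homogeneous Neumann data on the diagonal. Subtract $2\mu(x_1+x_2)-2\mu^2(x_1^2+x_2^2)$, with $\mu=b+1$. The remainder then has a constant source $2\mu$, which resonates with the first mixed exponent $\lambda_0=2$ of the $\pi/4$-wedge. This forces a term $\frac{4\mu}{\pi}r^2\log r\cos(2\alpha)$, whose gradient gives $(\partial_{x_2}-\partial_{x_1})u\approx\frac{8\mu}{\pi}(x_2-x_1)|\log r|>0$ near the origin. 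That contradicts the assumption.

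Your lifting to general $n$ is also incomplete, for two reasons.
\begin{itemize}
\item Transporting a strict gradient inequality needs convergence of derivatives, not the value-level statement $U^n\approx(n-2)+U^2$. The paper gets $C^2_{\rm loc}$ convergence from translation-uniform interior Schauder estimates.
\item When $k=\kappa_n(b)\le n-2$, sending $n-2$ coordinates to infinity yields the constant $(n-2)\wedge k+U^{2,(k-n+2)^+}=k$, so nothing is transported.
\end{itemize}
You must instead send only $k-1$ coordinates to infinity, which lands on $U^{n-k+1,1}$. Failure for these cap-one problems has to be proved separately. The paper does this by adding coordinates near $0$, using $U^{m+1,1}(\cdot,\eta)\to U^{m,1}$ in $C^2_{\rm loc}$ as $\eta\downarrow0$. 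That step needs failure points with $x_1/\min_{j\ge2}x_j$ arbitrarily small, so that $x_1$ remains the laggard; the corner analysis provides them.

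In Part (i), the boundary behaviour of $D=(\partial_{x_i}-\partial_{x_j})V^n$ is asserted rather than proved.
\begin{itemize}
\item Reading off $D\ge0$ on $\{x_i=0\}$, and $D=0$ on the faces $\{x_k=0\}$ for $k\ne i,j$, needs $C^1$ regularity of $V^n$ up to $\partial E_n$, including the edges where faces meet.
\item The behaviour ``at infinity'' needs convergence of gradients to those of the lower-dimensional value functions. Your Feynman--Kac argument needs the same information along paths that never leave the wedge.
\item Theorem~\ref{thm:HJB} only gives $V^n\in C^2(E_n)\cap C(\R_+^n)$ and traces of values.
\end{itemize}

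You were one step from the clean fix when you discarded the reflection $V^n(x)-V^n(\sigma x)$: apply it to the shifted function instead. Take $w_h(x):=V^n(x+he_i)-V^n(x+he_j)$ on $\{x_i<x_j\}$. Convexity of $\max$ makes $w_h$ solve $\frac12\Delta w_h+\sum_k c_k\partial_{x_k}w_h=0$ with measurable $c_k\in[b,b+1]$. All of its boundary values are values of $V^n$, so no derivative information is needed:
\begin{itemize}
\item it is zero on the diagonal and on the faces $\{x_k=0\}$, $k\ne i,j$;
\item it is nonnegative on $\{x_i=0\}$;
\item at infinity it is either a monotone increment of $V^{n-1}$, or zero, or a lower-dimensional finite difference that is nonnegative by induction on $n$.
\end{itemize}
The weak maximum principle on $\{x_i<x_j\}\cap B_R$ gives $w_h\ge0$. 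Dividing by $h$ gives Conjecture 2. It\^o's formula along $\overline\Phi$, together with $V^n(X_t)\to1$ on $\{\tau=\infty\}$, then gives Conjecture 1.
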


We report the proof of this statement in Sections \ref{sec:conjectureU} and \ref{sec:conjectureV}.
The key ingredient is the derivation of the appropriate recursive boundary condition at infinity.
For $V^n$, the recursion removes a particle at infinity and reduces $V^n$  to $V^{n-1}$ if $b>-1/n$.  For $U^n$ in the range $b>-1$, the recursion also decreases the remaining survivor cap,  and we need for this reason to introduce the \emph{capped survivor-count} value functions: 
\[
U^{n,r}(x):=
\sup_{\Phi\in\Ac_n}
\E_x\Big[
r\wedge\sum_{i=1}^n \mathds 1_{\{\tau_i^\Phi=\infty\}}
\Big],
\qquad
x\in\R_+^m,
~\text{for all}~
r=0,\ldots,n,
\]
with the conventions $
U^{n,0}\equiv0,$ $U^{n,r}=U^{n,n}=U^n$ for $r\ge n$, $U^{0,r}\equiv0$. For $b>-1$,  let us define
\[
\kappa_n(b):=\max\{1\le k\le n: kb>-1\},
\qquad n\ge1.
\]
Then, as will be shown in Section~\ref{sec:largeparticles}, no admissible strategy can make more than $\kappa_n(b)$ particles  survive forever in dimension $n$, and therefore,
\(
U^n=U^{n,\kappa_n(b)}.
\) 

The following HJB characterization specifies the exact nature of boundary and growth conditions at infinity, where, for a subset $I\subset\{1,\ldots,n\}$, $y^{-I}\longrightarrow \infty$ means $\min_{j\in I^c}y_j\longrightarrow\infty$.

\begin{theorem}\label{thm:HJB}
Consider the nontrivial setting of \eqref{setting:mainresult}. Then:
\begin{itemize}
\item[{\rm (i)}] $V^n$ is the unique
$C^2(E_n)\cap C_b(\R_+^n)$ solution to the HJB equation \eqref{eq:hjb} with 
\be
&&
\text{boundary condition}~~~~~~w=0
~~\text{on}~~\partial E_n,
\label{bc0:V}
\\
&&
\text{joint trace at infinity}~~~~
\lim_{y\to(x^I,\infty)} w(y)
=
V^{|I|}(x^I),
~~\text{for all}~~
I\subset\{1,\ldots,n\}.
\label{bc:V}
\ee
In particular, 
\(
\lim_{x\to\infty}V^n(x)=1.
\)
Moreover, an optimal Markov feedback is given by
\[
\hat\Phi^i_t
:=
\mathds 1_{\{i=\hat I(\hat X_t)\}},
~~
\hat I(x)
:=
\min\big\{1\le j\le n:
\partial_{x_j}V^n(x)=\max(\nabla V^n(x))\big\},
\]
with state process solving the corresponding feedback SDE as in \eqref{eq:optimalcontrol}.

\item[{\rm (ii)}] For $1\!\le\! r\!\le\!\kappa_n(b)$, $U^{n,r}$ is the
unique $C^2(E_n) \cap  C_b(\R_+^n)$ solution to the HJB equation \eqref{eq:hjb} with
\be
&&
\text{boundary condition}~~~~w(x)=U^{n-|\mathbf 0(x)|,r}(x^{-0}),
~~x\in\partial E_n,
\label{bc0:Ur}
\\
&&
\text{joint trace at infinity}~
\lim_{y\to(x^I,\infty)} w(y)
=
(q\wedge r)+U^{|I|,(r-|I^c|)^+}(x^I),
~
I\!\subset\!\{1,\ldots,n\}.~~~~~
\label{bc:Ur}
\ee
In particular, 
\(
\lim_{x\to\infty}U^{n,r}(x)=r.
\) 
Moreover, 
an optimal Markov feedback is given  by 
\[
\hat\Psi^{r,i}_t
:=
\mathds 1_{\{i=\hat J_r(\hat X_t)\}},
~~
\hat J_r(x)
:=
\min\big\{1\le j\le n:
\partial_{x_j}U^{n,r}(x)=\max(\nabla U^{n,r}(x))\big\},
\]
with state process solving the corresponding feedback SDE as in \eqref{eq:optimalcontrol},  and by continuing recursively with the corresponding lower-dimensional feedback after a particle
hits zero.
\item[{\rm (iii)}] Consequently,  \(U^n=U^{n,\kappa_n(b)}\) satisfies
\be\label{bc:U}
U^n(x)=U^{n-|\mathbf 0(x)|}(x^{-0}),
~ x\in\partial E_n,
~\text{and}~
\lim_{y\to(x^I\!,\infty)} \!\!U^n(y)
=
|I^c|\!\wedge\!\kappa_n(b)
+
U^{|I|,(\kappa_n(b)-|I^c|)^+}\!(x^I).
\ee
In particular
\(
\lim_{x\to\infty}U^n(x)=\kappa_n(b).
\)
\end{itemize}
\end{theorem}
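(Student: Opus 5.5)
The proof has three parts, carried out in this order. First we identify the traces of the value functions at infinity by probabilistic arguments. Then we build a classical solution by a PDE approximation. Finally we prove a verification theorem, which gives uniqueness and optimality of the feedback at the same time. We give the argument for $V^n$ in detail; the one for $U^{n,r}$ is the same, with the boundary data replaced by the capped lower-dimensional values.

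\textbf{Step 1: traces at infinity.} Fix $I$ and let $y\to(x^I,\infty)$. For the lower bound, use a strategy that gives each far-away particle $j\in I^c$ a small share $\varepsilon$ of the budget, where $\varepsilon$ is chosen so that $b+\varepsilon>0$ for $V^n$ (possible since $b>-\frac1n$ leaves room after splitting), and plays the near-optimal $|I|$-dimensional strategy with the remaining mass $1-|I^c|\varepsilon$ on the particles in $I$.

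This has two consequences:
\begin{itemize}
\item the particles in $I^c$ survive forever with probability tending to one, by the one-dimensional estimate $H_{b+\varepsilon}$;
\item the particles in $I$ face a slightly reduced budget, and we show that the corresponding value is continuous in the budget level as $\varepsilon\to0$.
\end{itemize}

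For the upper bound, the far particles do not hit zero before a large time $T$ with high probability, so on $[0,T]$ the $I$-subsystem is an admissible $|I|$-dimensional system. Beyond $T$ we use the Lipschitz estimate of Proposition \ref{prop:firststeps} (ii) together with a dimension-reduction argument. For $U^{n,r}$ the same argument gives the capped recursion $(|I^c|\wedge r)+U^{|I|,(r-|I^c|)^+}$.

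The cap $\kappa_n(b)$ comes from the same argument as Proposition \ref{prop:roughpro-intro}: if $k$ particles with $kb\le-1$ survived forever, their sum would be dominated by a driftless Brownian motion, which hits zero almost surely. This yields $U^n=U^{n,\kappa_n(b)}$.

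\textbf{Step 2: existence of a classical solution.} We induct on $n$. The boundary data on $\partial E_n$ and at infinity involve only lower-dimensional functions, which are already known to be continuous by the induction hypothesis. We solve the uniformly elliptic equation \eqref{eq:hjb}, whose Hamiltonian $\max(\nabla w)$ is convex and Lipschitz, on the truncated cubes $(0,K)^n$. The Dirichlet data are the candidate traces, glued continuously at the corners. The Hamiltonian is gradient-only with bounded coefficients, so standard quasilinear theory (Perron's method or Schauder continuity) gives $C^{2,\alpha}$ interior solutions. The maximum principle gives uniform bounds, since the data take values in $[0,1]$, or $[0,r]$ for $U^{n,r}$. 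Interior Schauder estimates then allow a diagonal limit as $K\to\infty$.

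The main obstacle is controlling the limit near the boundary and at infinity. We need barriers showing that the limit attains the traces \eqref{bc0:V} and \eqref{bc:V}:
\begin{itemize}
\item \emph{Near the faces}, we use sub- and supersolutions of product form, namely the lower bound $\prod_i H_{b+1/n}(x_i)$ and the upper bound $\sum_i H_{b+1}(x_i)$ from Proposition \ref{prop:firststeps}, shifted by the lower-dimensional data.
\item \emph{Near infinity in the directions $I^c$}, we try barriers of the form $w^{|I|}(x^I)\pm C\sum_{j\in I^c}e^{-2\beta x_j}$, with $\beta$ chosen from the drift margin.
\end{itemize}
If building these analytic barriers fails, we fall back on the probabilistic route: define the solution as the value function and obtain the traces directly from Step 1.

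\textbf{Step 3: verification and uniqueness.} Let $w$ be any bounded classical solution with the prescribed traces. The Itô argument of Proposition \ref{prop:negativedrift} gives $w(x)\ge\E_x[w(X^\Phi_{\tau^\Phi})\mathds 1_{\{\tau^\Phi<\infty\}}+\limsup_{T\to\infty} w(X^\Phi_T)\mathds 1_{\{\tau^\Phi=\infty\}}]$. On the event $\{\tau^\Phi=\infty\}$ we need the asymptotic behaviour of $X_T$. Each coordinate either converges to infinity, or it cannot stay in $(0,\infty)$ forever while bounded, because of the nondegenerate Brownian noise. Hence $X_T$ approaches some face at infinity with index set $I=\emptyset$ in the all-survive case, and the trace \eqref{bc:V} gives the limit $1$.

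For $U^{n,r}$ we instead stop at the first hitting of $\partial E_n$ and use the boundary condition \eqref{bc0:Ur} recursively, via strong Markov and induction. This yields $w\ge$ the criterion value for every $\Phi$. Equality holds for the feedback $\hat\Phi$, whose SDE is well posed by \cite{zvonkin1974transformation}. Therefore $w=V^n$ (respectively $U^{n,r}$), and the feedback is optimal. Finally, letting $x\to\infty$ in the trace formulas with $I=\emptyset$ gives the limits $1$, $r$ and $\kappa_n(b)$ stated in the theorem, and part (iii) follows from (ii) with $r=\kappa_n(b)$.
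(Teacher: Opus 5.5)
\textbf{Gap in Step 1 for $b<0$.} Your lower bound for the traces with $I\neq\emptyset$ fails whenever $b<0$. This covers a large part of the nontrivial regime: $-\frac1n<b<0$ for $V^n$, and $-1<b<0$ for $U^{n,r}$. A fixed share $\varepsilon$ keeps a far particle alive forever only if $b+\varepsilon>0$, i.e.\ $\varepsilon>|b|$. So $\varepsilon$ cannot be sent to $0$, ``continuity in the budget level'' is of no use, and the near subsystem permanently loses at least $|I^c|\,|b|$ of its budget. Your bound then tends to the value of the $|I|$-system with budget $1-|I^c|\varepsilon$, which is strictly below $V^{|I|}(x^I)$. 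For $n=2$, $I=\{1\}$, it equals $H_{1+b-\varepsilon}(x_1)\le H_{1+2b}(x_1)<H_{1+b}(x_1)=V^1(x_1)$.

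The missing idea is a separation in time. A particle started at $z$ needs no resource at all on $[0,\sqrt z]$: with drift $b$ alone it stays alive and above any fixed level $R$ with probability tending to one. The paper therefore runs the near-optimal $|I|$-dimensional strategy with the \emph{full} budget on $[0,\sqrt z]$. It then uses that survivors of a bounded-drift diffusion tend to $+\infty$, so that $\mathds 1_{\{\tau_i>t,\,X^i_t\ge R\}}\to\mathds 1_{\{\tau_i=\infty\}}$ a.s. Only after time $\sqrt z$ does it split the budget equally ($\frac1n$ each), at the cost of a factor $H_{b+\frac1n}(R)^n$. For $U^{n,r}$ the split is among at most $r$ selected particles, with cost factor $H_{b+\frac1r}(R)$; this is exactly where $r\le\kappa_n(b)$ enters. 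The upper bound, by contrast, needs no time $T$ or Lipschitz estimate. Restricting any $\Phi\in\Ac_n$ to the coordinates in $I$ gives an admissible $|I|$-dimensional control, hence $V^n(y)\le V^{|I|}(y^I)$ directly. For the capped count, the pathwise inequality $r\wedge N\le(|I^c|\wedge r)+\big((r-|I^c|)^+\wedge N_I\big)$ does the same job.

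\textbf{Consequences for Steps 2 and 3.} The same obstruction defeats your analytic barrier at infinity. Take $\psi:=w^{|I|}(x^I)-C\sum_{j\in I^c}e^{-2\beta x_j}$, and set $a:=\max(\nabla w^{|I|})$ and $c:=\max_{j\in I^c}2\beta Ce^{-2\beta x_j}$. Then $\frac12\Delta\psi+b\1^n\cdot\nabla\psi+\max(\nabla\psi)=(c-a)^++2\beta(b-\beta)C\sum_{j\in I^c}e^{-2\beta x_j}$. This is negative far out when $b<0$ and $a>0$, so $\psi$ is not a subsolution.

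Your fallback, taking the value function as the solution, then lacks an argument that it is $C^2$. The paper proves this directly. $V^n$ and $U^{n,r}$ are Lipschitz (Proposition \ref{prop:firststeps}) and are viscosity solutions by dynamic programming, hence distributional solutions with $\Delta W=-2b\1^n\cdot\nabla W-2\max(\nabla W)\in L^\infty_{\rm loc}$. Calder\'on--Zygmund then gives $W^{2,p}_{\rm loc}$, Sobolev embedding gives $C^{1,\alpha}$, and Schauder gives $C^{2,\alpha}_{\rm loc}$. Your cube approximation could in principle still give existence. Verification only uses the data on $\partial E_n$ and the limit as all coordinates tend to infinity, and product-type subsolutions such as $\prod_iH_{b+\frac1n}(x_i)$ control that limit. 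But the traces with $I\neq\emptyset$ required by the statement would still rest on a corrected Step 1.

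Finally, in Step 3 your dichotomy ``converges to infinity, or cannot survive while bounded'' omits paths with $\liminf_tX^i_t<\infty=\limsup_tX^i_t$. What you need is that survival forces $\liminf_tX^i_t=\infty$. This holds because infinitely many returns to $(0,K]$, spaced by unit times, each carry conditional probability at least $p_K>0$ of hitting $0$ within one time unit.
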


The proof of this result is reported in Sections \ref{sec:largeparticles} and \ref{sec:HJB}.

\section{Large time particle asymptotics in the nontrivial regime}
\label{sec:largeparticles}

This section justifies the recursive boundary conditions at infinity \eqref{bc:V} and \eqref{bc:Ur},
which is crucial for the PDE characterization of Theorem \ref{thm:HJB}. 
From the verification argument in \Cref{prop:negativedrift}, 
 it requires to understand the large-time behavior of the particles on the default event
in the nontrivial regime.
We give the precise statement in Lemma \ref{lem:preparation-remote} in the Appendix, 
and we  highlight here the main feature which drives our key-arguments:
\begin{eqnarray*}
&\lim_{t\to\infty}X^{\Phi,i}_t=\infty
~~\mbox{on}~\{\tau_i^\Phi=\infty\},
~\mbox{for all}~i=1,\ldots,n.
\end{eqnarray*}
We also notice that the same coupling argument as in Proposition \ref{prop:firststeps} shows that the capped
value functions $U^{n,r}$ are symmetric, componentwise nondecreasing, and uniformly
continuous on $\R_+^n$.

 \begin{proposition}\label{prop:recursive-trace-negative}
In the nontrivial regime setting of \eqref{setting:mainresult}, we have for all $x\!\in\!E_n$ and all $I\!\subset\!\{1,\ldots,n\}$:
\begin{itemize}
\item[\rm (i)] 
\(
\lim_{y\to(x^I,\infty)}V^n(y)=V^{|I|}(x^I).
\) 

\item[\rm (ii)] For all $1\le r\le\kappa_n(b)$,
\(
\lim_{y\to(x^I,\infty)}U^{n,r}(y)
=
(q\wedge r)+U^{|I|,(r-q)^+}(x^I),
\) with $q:=n-|I|$.
In particular,
\(
\lim_{x\to\infty}U^{n,r}(x)=r.
\) 

\item[\rm (iii)] No admissible strategy can do better than
$\kappa_n(b)$ survivals, consequently,
\(
U^n=U^{n,\kappa_n(b)}.
\)
\end{itemize}
\end{proposition}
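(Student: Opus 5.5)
We prove (iii) first, since it fixes the cap used in (ii), and then establish the traces in (i) and (ii) by matching lower and upper bounds.

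\emph{Step 1: part (iii).} Fix $\Phi\in\Ac_n$ and a set $J$ with $|J|=k$ and $kb\le -1$. On the event $\{\tau_j^\Phi=\infty \text{ for all } j\in J\}$, the sum $S^J_t:=\sum_{j\in J}X^{\Phi,j}_t$ has drift $kb+\sum_{j\in J}\phi^j_t\le kb+1\le 0$. Hence $S^J_t\le S^J_0+\sum_{j\in J}W^j_t$, and the right-hand side hits zero almost surely. This would force some $\tau_j^\Phi<\infty$, so this event is null. It follows that almost surely at most $\kappa_n(b)$ particles survive forever. Consequently $\sum_i\mathds 1_{\{\tau_i^\Phi=\infty\}}=\kappa_n(b)\wedge\sum_i\mathds 1_{\{\tau_i^\Phi=\infty\}}$, which gives $U^n=U^{n,\kappa_n(b)}$.

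\emph{Step 2: lower bounds for the traces.} Let $q=|I^c|$ and $y\to(x^I,\infty)$. Take an $\varepsilon$-optimal control $\Psi$ for the $|I|$-dimensional problem started at $x^I$. For $V^n$ we need $|I|b>-1$, which holds since $b>-\frac1n$ and $|I|\le n$; this makes $V^{|I|}$ nontrivial. We use $\Psi$ on the coordinates in $I$, reserving a small fraction $\eta$ of the budget so that the coordinates in $I^c$ receive drift $b+\eta/q$. If $b\ge 0$, we can take $\eta=0$. If $b<0$, the coordinates in $I^c$ cannot survive on their own, and this is the delicate point. We then run a switching strategy: apply $\Psi$ until its target set either survives "enough", i.e.\ reaches a high level $M$, or dies; after that, redirect the resource to the far coordinates. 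Alternatively, we use the Lipschitz bound of Proposition~\ref{prop:firststeps}(ii) to transfer budget continuously.

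For $U^{n,r}$ the target is $(q\wedge r)+U^{|I|,(r-q)^+}(x^I)$. We would first allocate the budget to keep $q\wedge r$ of the far particles alive; this is possible for $q\wedge r\le\kappa_n(b)$ particles, starting from huge levels, with probability close to $1$ using drift $b+1/(q\wedge r)>0$. The remaining budget then goes to $I$. Here a drift-budget tradeoff appears, and we would resolve it with the switching idea plus the structure $r\le\kappa_n(b)$. Concretely, we use the fact that far particles survive long time windows with high probability even with slightly negative drift, and we reallocate once the $I$-particles have either died or reached infinity, as in Lemma~\ref{lem:preparation-remote}.

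\emph{Step 3: upper bounds.} For any $\Phi$, we condition on the far coordinates. For $V^n$, the indicator product is at most the product over $I$, and the $I$-coordinates are driven by a control with values in $A_{|I|}$ once we restrict $\phi$. We would make this rigorous by viewing $(\phi^i)_{i\in I}$ as an admissible control in the larger filtration, which leaves $V^{|I|}$ unchanged by a standard argument using independence of the $W^i$. This gives $V^n(y)\le V^{|I|}(y^I)$, and continuity in $y^I$ concludes. For $U^{n,r}$, we split the count as $r\wedge(N_{I^c}+N_I)\le (q\wedge r)+(r-q)^+\wedge N_I$ when $N_{I^c}=q$. When some far particle dies, which happens with vanishing probability only if $b\ge0$, we need care: we would bound the count by $N_{I^c}+(r-N_{I^c})^+\wedge N_I$ and use the monotonicity of $r\mapsto U^{|I|,r}$ together with the fact that Step~1 caps $N_I$. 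Finally, the relation $\lim_{x\to\infty}U^{n,r}=r$ follows by taking $I=\emptyset$.

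\emph{Main obstacle.} The hardest step is the lower bound when $b<0$, where the far particles need budget to survive. It is not obvious that the reduced problem loses nothing. The key is that far particles consume negligible budget until the $I$-block has resolved. On survival, the $I$-block particles go to infinity (Lemma~\ref{lem:preparation-remote}), so a time-switching strategy with large horizons should recover the full value as the far levels tend to infinity.
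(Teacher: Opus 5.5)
\textbf{Genuine gap: the lower bound for the traces in (i) and (ii).}

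Part (iii) and the two upper bounds are fine and match the paper. For $U^{n,r}$ your extra case analysis is unnecessary: $r\wedge(N_I+N_J)\le (q\wedge r)+\big((r-q)^+\wedge N_I\big)$ holds pathwise for every value $N_J\le q$.

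The gap is in the lower bound. You flag it yourself as the main obstacle and leave it unresolved, and the one concrete rule you give is wrong. If, once the $I$-block has died or reached level $M$, you ``redirect the resource to the far coordinates'', then for $b<0$ the $I$-particles sitting at level $M$ evolve with drift $b<0$ and die almost surely. That destroys the all-survive indicator and the $I$-part of the capped count.

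The missing idea is that after the switch the budget must be \emph{shared} among all particles you want to keep, each with strictly positive drift:
\begin{itemize}
\item For $V^n$, split it equally among all $n$ coordinates. Each then has drift $b+\frac1n>0$, and from levels $\ge R$ all survive with probability at least $H_{b+\frac1n}(R)^n$.
\item For $U^{n,r}$, keep $q\wedge r$ far particles and at most $(r-q)^+$ of the $I$-particles that are alive and high. At most $r$ particles then share the budget, each with drift at least $b+\frac1r>0$.
\end{itemize}
This is exactly where the hypotheses $b>-\frac1n$ and $r\le\kappa_n(b)$ are used. Your check ``$|I|b>-1$'' is not the relevant one. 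For $n=2$ and $b=-0.6$ one has $V^2\equiv0$ while $V^1=H_{0.4}>0$, so the trace formula fails there. Any argument for (i) that never uses $nb>-1$ therefore cannot be complete.

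The side routes also fail as stated. Taking $\eta=0$ fails at $b=0$, since a driftless far coordinate hits $0$ a.s. Reserving a fraction $\eta$ of the budget would need continuity of the value in the budget. Proposition~\ref{prop:firststeps}(ii) does not give this: it is a Lipschitz bound in the initial state, while a budget deficit acts over an infinite horizon.

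For comparison, the paper switches at the deterministic time $\sqrt z$. On $[0,\sqrt z]$ the far coordinate receives nothing and moves only by $O(\sqrt z)\ll z$, so it is still above $R$ with probability tending to one. By Lemma~\ref{lem:preparation-remote}, the $\Psi$-survivors are above $R$ at time $\sqrt z$. After $\sqrt z$ the paper uses the shared allocation above. It sends coordinates to infinity one at a time and recovers the joint limit by monotonicity. It also handles $y^I\to x^I$ (rather than $y^I=x^I$) by monotonicity and continuity, a step you omit.

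Your stopping-time switch would work equally well once the post-switch allocation is the shared one. Switch at the first time every $I$-particle is dead or above $M$. This time is a.s. finite by Lemma~\ref{lem:preparation-remote} and does not depend on the far levels, and this version even gives the joint limit in one step.
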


\begin{proof}
By symmetry, it is enough to prove the one-coordinate traces when the coordinate sent to
infinity is the last one.

\medskip

\noindent
{\rm (i) The one-coordinate trace for \(V^n\).}
For an arbitrary
\(\Phi=(\phi^1,\ldots,\phi^n)\in\Ac_n\), note that  the control
\(
\widetilde\Phi:=(\phi^1,\ldots,\phi^{n-1})
\) 
 defined by restricting to the first $n-1$ coordinates is admissible, allowing the harmless
extra randomness coming from the \(n\)-th Brownian motion. Then, for \(x\in E_{n-1}\), we have:
\[
\E_{x,z}
\Big[
\prod_{i=1}^n\mathds 1_{\{\tau_i^\Phi=\infty\}}
\Big]
\le
\E_x
\Big[
\prod_{i=1}^{n-1}\mathds 1_{\{\tau_i^{\tilde\Phi}=\infty\}}
\Big]
\le
V^{n-1}(x),
~~\mbox{for all}~z>0.
\]
Taking the supremum over \(\Phi\in\Ac_n\), we obtain
\begin{equation}\label{growth:upperbound}
V^n(x,z)\le V^{n-1}(x),
~~\mbox{for all}~ z>0.
\end{equation}
For the reverse inequality, fix \(\varepsilon>0\), and choose
\(\Psi=(\psi^1,\ldots,\psi^{n-1})\in\Ac_{n-1}\), realized independently of \(W^n\), such that
\[
\E_x\Big[
\prod_{i=1}^{n-1}\mathds 1_{\{\tau_i^\Psi=\infty\}}
\Big]
\ge
V^{n-1}(x)-\varepsilon.
\]
For \(z>0\), define an \(n\)-dimensional control by
\[
\Phi^z_t
:=
(\Psi_t,0)\mathds 1_{[0,\sqrt z]}(t)
+
\frac1n\1^n\mathds 1_{(\sqrt z,\infty)}(t).
\]
On \([0,\sqrt z]\), the first \(n-1\) coordinates follow \(\Psi\), while
\[
X_t^{\Phi^z,n}=z+bt+W_t^n,
\qquad 0\le t\le\sqrt z.
\]
Fix \(R>0\), and set
\[
F_i^{z,R}
:=
\{\tau_i^{\Phi^z}\ge\sqrt z,\ X_{\sqrt z}^{\Phi^z,i}\ge R\},
\qquad i=1,\ldots,n,
\qquad
F^{z,R}:=\bigcap_{i=1}^nF_i^{z,R}.
\]
By \eqref{eq:fixed-R-preparation} of Lemma \ref{lem:preparation-remote}, applied
coordinate-wise to the first \(n-1\) coordinates under \(\Psi\), we have
\(
\prod_{i=1}^{n-1}\mathds 1_{F_i^{z,R}}
\xrightarrow[]{z\to\infty}
\prod_{i=1}^{n-1}\mathds 1_{\{\tau_i^\Psi=\infty\}},
\) a.s. which provides by dominated convergence:
\[
\E_{x,z}
\Big[
\prod_{i=1}^{n-1}\mathds 1_{F_i^{z,R}}
\Big]
\xrightarrow[]{z\to\infty}
\E_x
\Big[
\prod_{i=1}^{n-1}\mathds 1_{\{\tau_i^\Psi=\infty\}}
\Big].
\]
Moreover, by \eqref{eq:remote-coordinate-estimate-general} of Lemma
\ref{lem:preparation-remote}, applied with constant drift \(b\), we have
\(
\P[F_n^{z,R}]\longrightarrow1.
\)
Since \(F_n^{z,R}\) is independent of the first \(n-1\) coordinates, it follows that
\[
\E_{x,z}[\mathds 1_{F^{z,R}}]
\longrightarrow
\E_x
\Big[
\prod_{i=1}^{n-1}\mathds 1_{\{\tau_i^\Psi=\infty\}}
\Big].
\]
On \(F^{z,R}\), all particles are alive at time \(\sqrt z\) and are at least at level \(R\).
After time \(\sqrt z\), each coordinate receives drift \(b+\frac1n>0\). Hence, by the Markov
property,
\[
V^n(x,z)
\ge
\E_{(x,z)}[\mathds 1_{F^{z,R}}]\,H_{b+\frac1n}(R)^n.
\]
Letting first \(z\to\infty\), then \(R\to\infty\), and finally
\(\varepsilon\downarrow0\), we obtain
\[
\liminf_{z\to\infty}V^n(x,z)\ge V^{n-1}(x).
\]
Together with the upper bound \eqref{growth:upperbound}, this proves
\(
\lim_{z\to\infty}V^n(x,z)=V^{n-1}(x).
\)

\medskip

\noindent
{\rm (ii)} We prove first the one-coordinate trace. Fix $b>-1$ and
$1\le r\le \kappa_n(b)$. For $\Phi\in\Ac_n$, set
\[
N_n^\Phi:=\sum_{i=1}^n\mathds 1_{\{\tau_i^\Phi=\infty\}},
~~\mbox{and}~~
N_{n-1}^\Phi:=\sum_{i=1}^{n-1}\mathds 1_{\{\tau_i^\Phi=\infty\}}.
\]
Taking expectations in the inequality $r\wedge N_n^\Phi
\le
1+\big((r-1)\wedge N_{n-1}^\Phi\big)$, and maximizing over $\Phi\in\Ac_n$,
\begin{equation}\label{growth:upperboundU}
U^{n,r}(x,z)\le 1+U^{n-1,r-1}(x),
~~\mbox{for all}~~x\in E_{n-1},~z>0.
\end{equation}
We now prove the reverse inequality. Fix $\varepsilon>0$ and choose
$\Psi=(\psi^1,\ldots,\psi^{n-1})\in\Ac_{n-1}$, realized independently of $W^n$, such that
\[
\E_x\Big[
(r-1)\wedge
\sum_{i=1}^{n-1}\mathds 1_{\{\tau_i^\Psi=\infty\}}
\Big]
\ge
U^{n-1,r-1}(x)-\varepsilon.
\]
Fix $R>0$, and define
\b*
F_i^{z,R}
&:=&
\Big\{
\tau_i^\Psi>\sqrt z,\quad X_{\sqrt z}^{\Psi,i}\ge R
\Big\},
~~ i=1,\ldots,n-1,
~~
q_z^R:=
(r-1)\wedge\sum_{i=1}^{n-1}\mathds 1_{F_i^{z,R}},
\\
F_n^{z,R}
&:=&
\Big\{
z+bt+W_t^n>0,\ 0\le t\le\sqrt z,\quad
z+b\sqrt z+W_{\sqrt z}^n\ge R
\Big\}.
\e*
Choose, in a measurable way, a subset
$I_z^R\subset\{1,\ldots,n-1\}$ consisting of $q_z^R$ indices among those for which
$F_i^{z,R}$ occurs. For instance, take the smallest such indices.

Define an admissible $n$-dimensional strategy $\Phi^{z,R}$ as:
\[
\phi_t^{z,R,i}
=
\psi_t^i\mathds 1_{[0,\sqrt z]}(t)
+
\frac{\mathds 1_{\{i\in I_z^R\}}\mathds 1_{F_n^{z,R}}}{1+q_z^R}
\mathds 1_{(\sqrt z,\infty)}(t),
~ i=1,\ldots,n-1,
~\text{and}~
\phi_t^{z,R,n}
=
\frac{\mathds 1_{F_n^{z,R}}}{1+q_z^R}
\mathds 1_{(\sqrt z,\infty)}(t).
\]
\begin{itemize}
\item On $[0,\sqrt z]$, the first $n-1$ coordinates follow $\Psi$, and the $n$-th coordinate
receives no resource;
\item On $(\sqrt z,\infty)$, on the event $F_n^{z,R}$, the unit
budget is divided equally among the coordinates in $I_z^R\cup\{n\}$, and outside
$F_n^{z,R}$, allocate the budget arbitrarily. 
\end{itemize}
By \eqref{eq:fixed-R-preparation}, applied coordinate-wise to the first $n-1$
coordinates under $\Psi$, we see that $
q_z^R
\xrightarrow[z\to\infty]{}
(r-1)\wedge
\sum_{i=1}^{n-1}\mathds 1_{\{\tau_i^\Psi=\infty\}},$ a.s. Moreover, it follows from \eqref{eq:remote-coordinate-estimate-general}, applied with
constant drift $b$, that $\P[F_n^{z,R}]\longrightarrow1$. Since $F_n^{z,R}$ is independent of $q_z^R$, dominated convergence gives
\[
\E_{(x,z)}
\left[
\mathds 1_{F_n^{z,R}}(1+q_z^R)
\right]
\longrightarrow
1+
\E_x\Big[
(r-1)\wedge
\sum_{i=1}^{n-1}\mathds 1_{\{\tau_i^\Psi=\infty\}}
\Big].
\]
On $F_n^{z,R}$, the selected particles are alive, and are all at least at level $R$
at time $\sqrt z$. The number of selected particles is $1+q_z^R\le r$, and each
selected particle receives after time $\sqrt z$ a drift at least $b+\frac1r>0$, because $r\le\kappa_n(b)$. Therefore, conditionally on $\Fc_{\sqrt z}$, the
expected capped number of selected survivors is bounded from below by $(1+q_z^R)H_{b+\frac1r}(R)$, and consequently,
\[
U^{n,r}(x,z)
\ge
H_{b+\frac1r}(R)
\E_{(x,z)}
\Big[(1+q_z^R)
        \mathds 1_{F_n^{z,R}}
\Big].
\]
Letting first $z\to\infty$, then $R\to\infty$, and finally
$\varepsilon\downarrow0$, we obtain $\liminf_{z\to\infty}U^{n,r}(x,z)
\ge
1+U^{n-1,r-1}(x)$, which together with the upper bound \eqref{growth:upperboundU} shows that
\[
\lim_{z\to\infty}U^{n,r}(x,z)
=
1+U^{n-1,r-1}(x).
\]
We now derive the joint trace in {\rm (ii)}. Let $I\subset\{1,\ldots,n\}$, and set $
J:=I^c$, $q:=|J|=n-|I|$, and denote $N_I^\Phi:=\sum_{i\in I}\mathds 1_{\{\tau_i^\Phi=\infty\}}$ and $N_J^\Phi:=\sum_{j\in J}\mathds 1_{\{\tau_j^\Phi=\infty\}}$ for all $\Phi\in\Ac_n$. Since $N_J^\Phi\le q$, we have the pathwise inequality
\(
r\wedge(N_I^\Phi+N_J^\Phi)
\le
(q\wedge r)+\big((r-q)^+\wedge N_I^\Phi\big).
\)
Taking expectations and optimizing gives
\[
U^{n,r}(x^I,z^J)
\le
(q\wedge r)+U^{|I|,(r-q)^+}(x^I)
~~\text{for all}~~
x^I\in E_{|I|}.
\]
For the reverse inequality, send the coordinates in $J$ to infinity one after another. Each
application of the one-coordinate trace contributes one survivor and decreases the remaining cap
by one, until the remaining cap reaches zero. The condition needed to apply the one-coordinate
trace is preserved during this recursion: if, after $a$ coordinates have been sent to infinity, the
remaining cap $r-a$ is positive, then $(r-a)b>-1$ and $r-a\le n-a$.
Hence $r-a\le\kappa_{n-a}(b)$. Therefore, the successive one-coordinate traces yield
\[
\lim_{z^J\to\infty}U^{n,r}(x^I,z^J)
=
(q\wedge r)+U^{|I|,(r-q)^+}(x^I),
\]
first as an iterated limit, and then a genuine joint limit, by component-wise monotonicity of $U^{n,r}$.

It remains to allow the finite coordinates to approach the boundary. Let $(y^m)_{m\ge 1}\subset E_n$ be s.t. $(y^m)^I\to x^I\in\R_+^{|I|}$ and $(y^m)^J\to\infty$.
The upper bound above and the continuity of the lower-dimensional capped value functions give
\[
\limsup_{m\to\infty}U^{n,r}(y^m)
\le
(q\wedge r)+U^{|I|,(r-q)^+}(x^I).
\]
For the lower bound, set
\[
K:=\{i\in I:x_i>0\},
~~
L:=I\setminus K,
~\text{and define}~
x_\eta^K:=(x_i-\eta)_{i\in K}\in E_{|K|},
~\text{for some}~
\eta>0.
\]
For $m$ large enough, $(y^m)^K\ge x_\eta^K$ component-wise. Hence, by component-wise monotonicity and the
finite-boundary identity for the coordinates in $L$,
\[
U^{n,r}(y^m)
\ge
U^{n,r}(x_\eta^K,0^L,(y^m)^J)
=
U^{n-|L|,r}(x_\eta^K,(y^m)^J).
\]
Applying the already proved fixed-coordinate joint trace in this reduced system, with the above
conventions for capped values, gives
\[
\liminf_{m\to\infty}U^{n,r}(y^m)
\ge
(q\wedge r)+U^{|K|,(r-q)^+}(x_\eta^K)
\xrightarrow[\eta\downarrow0]{}
(q\wedge r)+U^{|I|,(r-q)^+}(x^I),
\]
by continuity and the finite-boundary identity of the
lower-dimensional capped value function.
This proves the joint trace in {\rm (ii)}. Finally, taking $I=\emptyset$ gives $\lim_{x\to\infty}U^{n,r}(x)=r$.

\medskip
\noindent
{\rm (iii)} We only focus on the non-trivial situation $k:=\kappa_n(b)<n$. Then $kb+1>0$ and $(k+1)b+1\le0$. We claim that no $k+1$ particles can survive simultaneously. Indeed, for any $I\subset\{1,\ldots,n\}$ with $|I|=k+1$, and for an arbitrary $\Phi\in\Ac_n$, the process $S_t^I:=\sum_{i\in I}X_t^{\Phi,i}$ satisfies
\[
S_t^I
=
S_0^I
+
\int_0^t
\Big((k+1)b+\sum_{i\in I}\phi_s^i\Big)\dd s
+
\sum_{i\in I}W_t^i
\le
S_0^I+\sum_{i\in I}W_t^i,
\qquad t\ge0,
\]
because $\sum_{i\in I}\phi_s^i\le1$ and $(k+1)b+1\le0$. If all particles in
$I$ survive forever, then $S_t^I>0$ for all $t\ge0$, and
\(
\P_x[\tau_i^\Phi=\infty,\ i\in I]
\le
\P_x\left[
S_0^I+\sum_{i\in I}W_t^i>0,\ \forall t\ge0
\right]
=0,
\)
since $\sum_{i\in I}W^i$ is a one-dimensional Brownian motion up to a deterministic time
change. Taking the union over the finitely many subsets $I$ of cardinality $k+1$, we obtain $\sum_{i=1}^n\mathds 1_{\{\tau_i^\Phi=\infty\}}
\le k$, a.s.
By the arbirariness of $\Phi\in\Ac_n$, and recalling that $k=\kappa_n(b)$, this implies that
\[
U^n(x)
=
\sup_{\Phi\in\Ac_n}
\E_x\Big[
\sum_{i=1}^n\mathds 1_{\{\tau_i^\Phi=\infty\}}
\Big]
=
\sup_{\Phi\in\Ac_n}
\E_x\Big[
k\wedge
\sum_{i=1}^n\mathds 1_{\{\tau_i^\Phi=\infty\}}
\Big]
=
U^{n,\kappa_n(b)}(x).
\]
Together with {\rm (ii)}, this provides the corresponding capped trace formulas for $U^n$
when needed.
\ep
\end{proof}

\section{HJB characterization}\label{sec:proof-wellposed}\label{sec:HJB}

This section is devoted to the proof of Theorem \ref{thm:HJB}. The local regularity is obtained
by standard arguments in interior elliptic regularity theory. We recall the local function spaces
used below.

Let  $O\subset\mathbb R^n$ be an open set, and $p\in(1,\infty)$. 
The Sobolev space
$W^{2,p}(O)$ consists of all functions $u\in L^p(O)$,
whose weak derivatives up to order
two belong to $L^p(O)$, endowed with the norm
\[
\|u\|_{W^{2,p}(O)}
:=
\sum_{|\beta|\le 2}\|D^\beta u\|_{L^p(O)}.
\]
We denote by
\(
W^{2,p}_{\mathrm{loc}}(E_n):=\cap_K W^{2,p}(K),
\)
where the intersection is over all compact subsets $K\Subset E_n$.
Similarly, for $\alpha\in(0,1)$ and an open set $O\subset\mathbb R^n$, the H\"older space
$C^{2,\alpha}(O)$ consists of all maps $u\in C^2(O)$,
endowed with the norm
\[
\|u\|_{C^{2,\alpha}(O)}
:=
\sum_{|\beta|\le 2}\|D^\beta u\|_{L^\infty(O)}
+
\sum_{|\beta|=2}[D^\beta u]_{C^{0,\alpha}(O)}
<\infty,
\]
where
\[
[w]_{C^{0,\alpha}(O)}
:=
\sup_{\substack{x,y\in O\\x\neq y}}
\frac{|w(x)-w(y)|}{|x-y|^\alpha}.
\]
We denote
\(
C^{2,\alpha}_{\mathrm{loc}}(E_n):=\cap_K C^{2,\alpha}(K),
\)
where the intersection is over all compact subsets $K\Subset E_n$.

\vspace{5mm}
\noindent {\bf Proof of Theorem \ref{thm:HJB}.}
We proceed by induction on $n$. 
For $n=1$, we have
\[
V^1=U^{1,1}=U^1=H_{b+1}\in C^2(E_1)\cap C(\mathbb R_+),
\]
which is the unique bounded classical solution with the prescribed boundary behavior. Assume now
that the statement holds in all dimensions strictly smaller than $n$.

\vspace{3mm}
\noindent {\it Step 1. The value functions solve the HJB equation and satisfy the boundary conditions.}
Assume first that $b>-\frac1n$. By Proposition \ref{prop:firststeps}, $V^n$ is continuous on
$\R_+^n$. By standard stochastic control theory, $V^n$ satisfies the dynamic programming
principle, and is therefore a bounded viscosity solution to the HJB equation \eqref{eq:hjb} on
$E_n$. Its finite boundary condition is obvious from the definition, and its trace at infinity
is given by Proposition \ref{prop:recursive-trace-negative}.

Assume next that $b>-1$, and fix $1\le r\le \kappa_n(b)$. The same coupling argument as in
Proposition \ref{prop:firststeps} shows that $U^{n,r}$ is continuous on $\R_+^n$; indeed it is
symmetric, componentwise nondecreasing, and satisfies the same uniform continuity estimate as
$U^n$. The dynamic programming principle then implies that $U^{n,r}$ is a bounded viscosity
solution to \eqref{eq:hjb} on $E_n$. The finite boundary condition is
\(
U^{n,r}(x)=U^{n-|\mathbf 0(x)|,r}(x^{-0}),\qquad x\in\partial E_n,
\)
and the trace at infinity is given by Proposition \ref{prop:recursive-trace-negative}. Finally,
taking $r=\kappa_n(b)$ and using
\(
U^n=U^{n,\kappa_n(b)}
\)
gives the corresponding assertions for the original value function $U^n$.

\vspace{3mm}
\noindent {\it Step 2. Interior regularity.}
We show that the value functions obtained in Step 1 are in
$C^{2,\alpha}_{\mathrm{loc}}(E_n)\cap C(\mathbb R_+^n)$ for some $\alpha\in(0,1)$.
Let $W$ denote either $V^n$ or one of the capped value functions $U^{n,r}$,
$1\le r\le\kappa_n(b)$. The argument is identical in all cases, since it only uses boundedness,
local Lipschitz continuity, and the HJB equation on $E_n$.

Fix compact sets $K\Subset K'\Subset E_n$,  
where $K \Subset K'$ means that $K$ is a compact set and is contained in the interior of $K'$.   
Since $W$ is Lipschitz on $\mathbb R^n_+$,
Rademacher's theorem implies that $W$ is differentiable almost everywhere on $K'$ and
$\nabla W\in L^\infty(K')$. Since $W$ is a viscosity solution to \eqref{eq:hjb}, the uniform
ellipticity of the equation implies that it is also a distributional solution; see
Caffarelli--Cabr\'e \cite[Proposition~2.9]{CaffarelliCabre}. Therefore,
\[
\Delta W
=
f
:=
-2b\,\mathbf 1^{\!n}\cdot\nabla W
-
2\max(\nabla W)
\in L^\infty(K').
\]
By the interior Calder\'on--Zygmund estimate for Poisson's equation, see
Gilbarg--Trudinger \cite[Theorem~9.11]{GT}, we have, for all $p\in(1,\infty)$,
\[
\|W\|_{W^{2,p}(K)}
\le
C_{K,K',p}
\Big(
\|W\|_{L^p(K')}
+
\|f\|_{L^p(K')}
\Big),
\]
for some constant $C_{K,K',p}$. Hence $W\in W^{2,p}(K)$. Choosing $p>n$, the Sobolev
embedding theorem, see Gilbarg--Trudinger \cite[Theorem~7.26]{GT}, gives
\(
W^{2,p}(K)\hookrightarrow C^{1,\alpha}(K),\) with \(\alpha:=1-\frac np\in(0,1)\).
Thus $W\in C^{1,\alpha}(K)$. Since the map $q\mapsto\max(q)$ is Lipschitz,
we have $f\in C^{0,\alpha}(K)$. The interior Schauder estimates for Poisson's equation,
see Gilbarg--Trudinger \cite[Theorems~6.2 and~6.6]{GT}, then imply that $W\in C^{2,\alpha}(K)$. As $K\Subset E_n$ is arbitrary, this proves
\[
V^n\in C^{2,\alpha}_{\mathrm{loc}}(E_n)\cap C(\mathbb R_+^n),
\qquad
U^{n,r}\in C^{2,\alpha}_{\mathrm{loc}}(E_n)\cap C(\mathbb R_+^n),
~~\text{for every}~~
1\le r\le\kappa_n(b).
\]
\noindent {\it Step 3. A verification identity.}
We shall use the following standard consequence of It\^o's formula. Let
$w\in C^2(E_n)\cap C(\mathbb R_+^n)$ be a bounded classical solution to \eqref{eq:hjb},
and let $\Phi\in\Ac_n$ be arbitrary. Set
\[
\tau^\Phi:=\min_{1\le i\le n}\tau_i^\Phi,
\qquad
\sigma_K:=\inf\{t>0:\max(X_t^\Phi)>K\}.
\]
Applying It\^o's formula to $w(X^\Phi)$ on $[0,T\wedge\tau^\Phi\wedge\sigma_K]$ gives
\[
w(x)
=
\E_x\big[w(X^\Phi_{T\wedge\tau^\Phi\wedge\sigma_K})\big]
-
\E_x\Big[
\int_0^{T\wedge\tau^\Phi\wedge\sigma_K}
\left(
\frac12\Delta w+(b\mathbf 1^{\!n}+\Phi_t)\cdot\nabla w
\right)(X_t^\Phi)\dd t
\Big].
\]
By the HJB equation, the integrand is nonpositive. Letting first $K\to\infty$ and then
$T\to\infty$, and using the boundedness of $w$, yields
\begin{equation}\label{eq:verification-ineq-general}
w(x)
\ge
\E_x\left[
w(X^\Phi_{\tau^\Phi})\mathds 1_{\{\tau^\Phi<\infty\}}
+
\limsup_{T\to\infty}w(X_T^\Phi)\mathds 1_{\{\tau^\Phi=\infty\}}
\right].
\end{equation}
Let $\hat\tau:=\min_{1\le i\le n}\hat\tau_i,$ $\hat\tau_i:=\inf\{t\!>\!0:\hat X_t^i\!=\!0\}$ where $\hat X=X^{\hat\Phi}$ is the state corresponding to 
\[
\hat\Phi^i_t
:=
\mathds 1_{\{i=\hat I(\hat X_t)\}},
\qquad
\hat I(x)
:=
\min\left\{
1\le j\le n:
\partial_{x_j}w(x)=\max(\nabla w(x))
\right\}.
\]
The well-posedness of the SDE defining $\hat X$ follows as in
\eqref{eq:optimalcontrol}.
Then, the equality holds in the HJB equation along $\hat\Phi$, and we obtain by the same localization argument:
\begin{equation}\label{eq:verification-eq-general}
w(x)
=
\E_x\left[
w(\hat X_{\hat\tau})\mathds 1_{\{\hat\tau<\infty\}}
+
\lim_{T\to\infty}w(\hat X_T)\mathds 1_{\{\hat\tau=\infty\}}
\right],
\end{equation}
whenever the limit on $\{\hat\tau=\infty\}$ exists. 

\vspace{3mm}
\noindent {\it Step 4. Uniqueness and verification for $V^n$.}
Let $v\in C^2(E_n)\cap C(\mathbb R_+^n)$ be a bounded classical solution to
\eqref{eq:hjb} satisfying the boundary conditions in Theorem \ref{thm:HJB} {\rm (i)}.
For an arbitrary $\Phi\in\Ac_n$, \eqref{eq:verification-ineq-general} and the boundary condition
$v=0$ on $\partial E_n$ give
\[
v(x)
\ge
\E_x\left[
\limsup_{T\to\infty}v(X_T^\Phi)\mathds 1_{\{\tau^\Phi=\infty\}}
\right].
\]
On $\{\tau^\Phi=\infty\}$, Lemma \ref{lem:preparation-remote} implies, coordinate by coordinate, that $X_T^{\Phi,i}\xrightarrow[T\to\infty]{}\infty$, a.s. $i=1,\ldots,n$.
Then, by the joint trace condition \eqref{bc:V}, $\lim_{T\to\infty}v(X_T^\Phi)=1$ on $\{\tau^\Phi=\infty\}$, and therefore $
v(x)\ge \P_x[\tau^\Phi=\infty]$.
Since $\Phi\in\Ac_n$ is arbitrary,
\[
v(x)\ge V^n(x).
\]
Conversely, let $\hat\Phi$ be the feedback control associated with $v$ as in Step 3. Then
\eqref{eq:verification-eq-general}, the finite boundary condition, Lemma
\ref{lem:preparation-remote}, and \eqref{bc:V} give
\[
v(x)
=
\P_x[\hat\tau=\infty]
\le
V^n(x).
\]
Hence $v=V^n$. This also proves the optimality of the feedback control in
Theorem \ref{thm:HJB} {\rm (i)}.

\vspace{3mm}
\noindent {\it Step 5. Uniqueness and verification for $U^{n,r}$.}
Assume that $b>-1$, fix $1\le r\le\kappa_n(b)$, and let
$u\in C^2(E_n)\cap C(\mathbb R_+^n)$ be a bounded classical solution to \eqref{eq:hjb}
satisfying the finite boundary condition
\[
u(x)=U^{n-|\mathbf 0(x)|,r}(x^{-0}),
\qquad x\in\partial E_n,
\]
and the joint trace condition \eqref{bc:Ur}.  Let $\Phi\in\Ac_n$ be arbitrary, and denote
$N_n^\Phi:=\sum_{i=1}^n\mathds 1_{\{\tau_i^\Phi=\infty\}}$.
By \eqref{eq:verification-ineq-general}, the finite boundary condition, Lemma
\ref{lem:preparation-remote}, and the joint trace \eqref{bc:Ur} with $I=\emptyset$, we obtain
\begin{equation}\label{eq:verification-U-upper}
u(x)
\ge
\E_x\left[
U^{n-|\mathbf 0(X^\Phi_{\tau^\Phi})|,r}\big((X^\Phi_{\tau^\Phi})^{-0}\big)
\mathds 1_{\{\tau^\Phi<\infty\}}
+
r\mathds 1_{\{\tau^\Phi=\infty\}}
\right].
\end{equation}
On $\{\tau^\Phi<\infty\}$, the particles in $\mathbf 0(X^\Phi_{\tau^\Phi})$ have already failed.
The restriction of the continuation control to the remaining coordinates is admissible in the
lower-dimensional system. Therefore, by the definition of the lower-dimensional capped value
function,
\[
\E_x\left[
r\wedge N_n^\Phi
\ \big|\ \mathcal F_{\tau^\Phi}
\right]
\le
U^{n-|\mathbf 0(X^\Phi_{\tau^\Phi})|,r}\big((X^\Phi_{\tau^\Phi})^{-0}\big)
\quad\mbox{on }\{\tau^\Phi<\infty\}.
\]
On $\{\tau^\Phi=\infty\}$, all $n$ particles survive, and hence
\(
r\wedge N_n^\Phi=r.
\)
Combining these observations with \eqref{eq:verification-U-upper} gives
\(
u(x)
\ge
\E_x\big[r\wedge N_n^\Phi\big].
\)
Taking the supremum over $\Phi\in\Ac_n$ yields
\[
u(x)\ge U^{n,r}(x).
\]
We now prove the reverse inequality. Let $\hat\Psi$ be the feedback associated with $u$,
\[
\hat\Psi^{i}_t
:=
\mathds 1_{\{i=\hat J(\hat X_t)\}},
\qquad
\hat J(x)
:=
\min\left\{
1\le j\le n:
\partial_{x_j}u(x)=\max(\nabla u(x))
\right\},
\]
up to the first hitting time $\hat\tau$. Then \eqref{eq:verification-eq-general}, the finite
boundary condition, Lemma \ref{lem:preparation-remote}, and \eqref{bc:Ur} give
\begin{equation}\label{eq:verification-U-equality}
u(x)
=
\E_x\left[
U^{n-|\mathbf 0(\hat X_{\hat\tau})|,r}\big(\hat X_{\hat\tau}^{-0}\big)
\mathds 1_{\{\hat\tau<\infty\}}
+
r\mathds 1_{\{\hat\tau=\infty\}}
\right].
\end{equation}
On $\{\hat\tau<\infty\}$, we continue recursively with an optimal lower-dimensional feedback
for
\[
U^{n-|\mathbf 0(\hat X_{\hat\tau})|,r}
\big(\hat X_{\hat\tau}^{-0}\big),
\]
with the convention $U^{m,r}=U^{m,m}=U^m$ when $r\ge m$ and $U^{0,r}\equiv0$.
This lower-dimensional optimal feedback is available by the induction hypothesis. The
$n$-dimensional strategy obtained by concatenating $\hat\Psi$ before $\hat\tau$ with these
lower-dimensional optimal feedbacks after $\hat\tau$ is admissible, and its expected capped
survivor count is exactly the right-hand side of \eqref{eq:verification-U-equality}. Hence
\(
U^{n,r}(x)\ge u(x).
\)
Together with the opposite inequality, this proves
\(
u=U^{n,r}.
\)
The same argument proves the optimality of the recursive feedback described in
Theorem \ref{thm:HJB} {\rm (ii)}.

Finally, taking $r=\kappa_n(b)$ and using Proposition \ref{prop:recursive-trace-negative} gives
\(
U^n=U^{n,\kappa_n(b)},
\)
and therefore the HJB characterization and verification statement for the original value function
$U^n$. This completes the proof of Theorem \ref{thm:HJB}.
\ep

\section{Conjectures fail for $U^n$}
\label{sec:conjectureU}

We proceed in two steps. First, we prove the failure in dimension two. This argument is local near the corner and only uses the positivity of $b+1$, hence it works for every $b>-1$. Second, we lift the two-dimensional failure to higher dimensions. In the range where the survivor cap is smaller than the dimension, the lifting has to be performed with the capped values $U^{m,r}$.

In the remaining of this section, we denote by a slight abuse of notation
\[u(x_1, x_2) = u = u(r, \alpha),\] 
for both Cartesian and polar coordinates, when the context is clear. 
The gradient $\nabla u$ and the Laplacian $\Delta u$ are with respect to the Cartesian coordinates.

\subsection{The two-dimensional counterexample}

Throughout this subsection, assume $b>-1$ and write
\[
u:=U^{2,\kappa_2(b)}=U^2=U^{2,2}\mathds 1_{\{b>-\frac12\}}+U^{2,1}\mathds 1_{\{b\le-\frac12\}}.
\]
\begin{theorem}\label{thm:counterexample-2d}
There exists a non-empty open set $O\!\subset\!\{(x_1,x_2)\!\in\! E_2\!:x_1\!<\!x_2\}$ such that $\partial_{x_2}u>\partial_{x_1}u$ on $O$.
\end{theorem}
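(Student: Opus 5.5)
The idea is to expand $u$ near the corner $0\in\partial E_2$ and show that a logarithmic resonance term dominates the antisymmetric part of the gradient. By Theorem \ref{thm:HJB}, $u\in C^{2}(E_2)\cap C_b(\R_+^2)$ solves \eqref{eq:hjb}. By \eqref{bc:U}, the boundary data are $u(0,s)=u(s,0)=U^1(s)=H_{b+1}(s)=cs-\tfrac{c^2}{2}s^2+O(s^3)$, with $c:=2(b+1)>0$; in the capped case $b\le-\frac12$ the boundary value is still $U^{1,1}=H_{b+1}$. Since $u$ is symmetric, $\partial_{x_2}u-\partial_{x_1}u$ vanishes on the diagonal. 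The claim therefore reduces to identifying the sign of this difference just below the diagonal line $x_1=x_2$ (on the side $x_1<x_2$) as $r=|x|\to0$. I will work in the polar coordinates $(r,\alpha)$ with $\alpha\in(0,\frac\pi2)$.

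\emph{Step 1 (first order).} Blow up $u_\varepsilon(y):=u(\varepsilon y)/\varepsilon$. By Proposition \ref{prop:firststeps} (ii) this family is uniformly Lipschitz. The rescaled equation is $\tfrac12\Delta u_\varepsilon+\varepsilon\big(b\mathbf 1^{\!2}\cdot\nabla u_\varepsilon+\max(\nabla u_\varepsilon)\big)=0$. Any limit is therefore harmonic in the quadrant with linear growth and boundary data $cy_1$, $cy_2$. A Phragm\'en--Lindel\"of argument in the quadrant identifies the limit as $c(y_1+y_2)$ up to a multiple of $y_1y_2$, and the growth bound excludes that multiple. Rescaled interior Schauder estimates (as in Step 2 of the proof of Theorem \ref{thm:HJB}) then give $\nabla u\to(c,c)$ in cones away from the axes, and in fact up to the axes by boundary Schauder estimates, since the boundary data are smooth. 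This first-order information gives no sign: the gradient is asymptotically symmetric.

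\emph{Step 2 (second order and resonance).} Write $u=c(x_1+x_2)+v$. Then $\nabla v=o(1)$, and $v$ satisfies
\[
\tfrac12\Delta v=-(2b+1)c-b(\partial_1v+\partial_2v)-\max(\nabla v)=-(2b+1)c+o(1),
\]
with boundary values $v(0,s)=v(s,0)=-\tfrac{c^2}{2}s^2+O(s^3)$. The quadratic $-\tfrac{c^2}{2}(x_1^2+x_2^2)$ matches the boundary data but has Laplacian $-2c^2$. This leaves the constant mismatch
\[
D:=-2(2b+1)c+2c^2=2c>0,
\]
which must be absorbed by a degree-two solution of $\Delta f=D$ vanishing on both axes. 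The quadrant has opening $\pi/2$, so degree $2$ is resonant with the Dirichlet eigenfunction $\sin2\alpha$. Writing $f=ar^2\log r\,\sin2\alpha+r^2h(\alpha)$, one has $\Delta(r^2\log r\sin2\alpha)=4\sin2\alpha$. Hence $h''+4h=D-4a\sin2\alpha$ with $h(0)=h(\pi/2)=0$, and the Fredholm condition forces $a=D/\pi$. Thus I expect
\[
u=c(x_1+x_2)-\tfrac{c^2}{2}|x|^2+\tfrac{2D}{\pi}\,x_1x_2\log|x|+r^2h(\alpha)+\text{(remainder)},
\]
where $h$ is symmetric about $\alpha=\pi/4$ and the remainder has gradient $o(r|\log r|)$.

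\emph{Step 3 (conclusion).} Differentiating the expansion, the symmetric terms contribute $O(r)$ to $\partial_{x_2}u-\partial_{x_1}u$. The resonance term contributes $\tfrac{2D}{\pi}(x_1-x_2)\log|x|+O(r)$. On a cone $\{\alpha\in[\frac\pi4+\delta,\frac\pi2-\delta]\}$ one has $x_1-x_2\le -c_\delta r$ and $\log r<0$. So for $r$ small the difference is at least $\tfrac{2D}{\pi}c_\delta r|\log r|-Cr>0$, and the open set $O$ is this cone intersected with a small ball.

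The main obstacle is making Step 2 rigorous, namely proving that the remainder $w:=u-c(x_1+x_2)+\tfrac{c^2}{2}|x|^2-f$ has gradient $o(r|\log r|)$ in the cone. I would first bound $w$ itself by $O(r^{2+\epsilon})$, or at least $o(r^2|\log r|)$. To do this, I would use barriers of the form $\pm K r^{2+\epsilon}\sin(\mu\alpha)$, or $r^{2}\sin2\alpha$ combined with a small positive function, on a small sector. The nonlinear term $\max(\nabla v)$ is bounded by $|\nabla v|=o(1)$, and more precisely by $O(r|\log r|)$ after a bootstrap from a cruder barrier, so it is an admissible lower-order right-hand side. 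Having controlled $w$, I would convert the sup bound into a gradient bound via rescaled Schauder estimates on dyadic annuli $\{r\sim 2^{-k}\}$ intersected with the cone. I expect the bootstrap of the nonlinear term and the correct choice of barrier exponent near the resonant degree to be the delicate point.
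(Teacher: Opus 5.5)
Your argument is correct in its main line. It produces the same resonant term as the paper: your $a=D/\pi=4\mu/\pi$, with $\mu=b+1$, is the paper's $c_b$, and your $\sin 2\alpha$ is the paper's $\cos 2\alpha$ once the angle is shifted by $\pi/4$. The route, however, is different.

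\textbf{How the paper argues.} It proceeds by contradiction. Assuming $\partial_{x_1}u\ge\partial_{x_2}u$ on $D=\{x_1<x_2\}$ turns the HJB equation into the constant-coefficient linear equation $\frac12\Delta u+(b+1)\partial_{x_1}u+b\,\partial_{x_2}u=0$ on the $\pi/4$-sector $D$. This comes with Dirichlet data on $\{x_1=0\}$ and, by symmetry, a homogeneous Neumann condition on the diagonal. After subtracting $P_1+P_2$, the paper quotes the linear mixed Dirichlet--Neumann corner expansion (Dauge). No blow-up and no control of the nonlinearity are needed, and the resulting sign contradicts the assumption.

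\textbf{How you argue.} You keep the nonlinear equation on the full quadrant, with Dirichlet data on both axes. This costs you Step 1 ($\nabla u\to(c,c)$, so that $\max(\nabla v)$ becomes a perturbation). It also costs a remainder estimate with a perturbation known a priori only to be $o(1)$. Incidentally, odd reflection across both axes gives the Liouville step of Step 1 more cheaply than Phragm\'en--Lindel\"of.

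\textbf{What each buys.} The paper's contradiction trick lets it cite a linear theorem directly. Your route yields a genuine asymptotic expansion rather than a contradiction. It shows $\partial_{x_2}u>\partial_{x_1}u$ on every cone $\{\frac\pi4+\delta\le\alpha\le\frac\pi2-\delta\}$ near the corner. That is directly the strengthened form (failure points with $x_1<\delta x_2$) used later in Lemma \ref{lem:cap-one-failure}.

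\textbf{The barrier step fails as written.} This is the only part of your sketch that does not work.
\begin{itemize}
\item The bound $w=O(r^{2+\epsilon})$ is not available. In general $w$ contains a homogeneous mode $\kappa r^2\sin 2\alpha$ whose coefficient $\kappa$ is fixed by global data, so no local barrier can remove it.
\item Comparison functions $r^{\beta}\cos(\gamma(\alpha-\frac\pi4))$ stay positive on the closed quadrant only if $\gamma<2$. Superharmonicity then forces $\beta<2$.
\item Such barriers therefore give only $|w|\le Cr^{2-\epsilon}$, hence $|\nabla w|\le Cr^{1-\epsilon}$. This does not beat $r|\log r|$; the resonance is exactly what blocks simple barriers.
\end{itemize}

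\textbf{A workable replacement.} First subtract an $O(r^3)$ extension of the boundary data, for instance $g(x_1)+g(x_2)$ with $g(s)=H_{b+1}(s)-cs+\frac{c^2}{2}s^2$. Then expand $w$ in $\sin(2k\alpha)$ and solve the radial ODEs by variation of constants. For the resonant mode,
\[
w_1(r)=A r^2+\frac{r^2}{4}\int_\rho^r\frac{g_1(s)}{s}\,\mathrm{d}s-\frac{r^{-2}}{4}\int_0^r s^3 g_1(s)\,\mathrm{d}s=o(r^2|\log r|)\quad\text{whenever } g_1=o(1).
\]
The analogous formulas with $r^{\pm 2k}$ show that the modes $k\ge2$ contribute $O(r^2)$ in total. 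Rescaled interior Schauder estimates on dyadic annuli then give $\nabla w=o(r|\log r|)$ on closed subcones, which is all you need.

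\textbf{An alternative fix.} Upgrade $\nabla v=o(1)$ to $O(r^{\beta})$ using $W^{2,p}$ regularity in the convex right-angle corner. After that, apply the same Kondratiev--Dauge corner theory the paper invokes, treating $\max(\nabla v)$ as a known right-hand side.
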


\proof
We argue by contradiction, assuming that
\begin{equation}\label{eq:ass-ineq-b}
\partial_{x_1}u(x_1,x_2)\ge \partial_{x_2}u(x_1,x_2),
~~\text{for all}~~ (x_1,x_2)\in D:=\{(x_1,x_2)\in E_2:\ x_1<x_2\}.
\end{equation}
Then \(u\) satisfies the linear equation
\begin{equation}\label{eq:u2-linear}
\mathcal L u=0
~~\text{on}~~D,
~~\text{with}~~
\mathcal L
:=
\frac12\Delta+(b+1)\partial_{x_1}+b\partial_{x_2}.
\end{equation}
The boundary of \(D\) decomposes into $\partial D=\Gamma_D\cup\Gamma_N$, where
$\Gamma_D:=\{(0,x_2):x_2>0\}$ and $\Gamma_N:=\{(z,z):z>0\}$. By Theorem \ref{thm:HJB} and the symmetry of $u$, we have the boundary conditions
\begin{equation}\label{eq:u2-Neu}
u(0,.)
=
U^1
=
H_{b+1}
~\text{on}~\Gamma_D,
~~\text{and}~~
\partial_n u=0
~\text{on }\Gamma_N,
\end{equation}
where \(n\) denotes the inward normal to \(D\) along \(\Gamma_N\), proportional to
\((-1,1)\). We now analyze the mixed boundary problem near the corner \(0\). We introduce polar coordinates in the wedge \(D\):
\[
x_1=r\cos\left(\frac{\pi}{4}+\alpha\right),
\qquad
x_2=r\sin\left(\frac{\pi}{4}+\alpha\right),
\qquad
\alpha\in\Big(0,\frac{\pi}{4}\Big).
\]
The Neumann condition \eqref{eq:u2-Neu} becomes
$\partial_\alpha u(r,0)=0$,  
whereas the Dirichlet condition reads
$u(r,\frac{\pi}{4})=1-\mathrm e^{-2\mu r}
=
2\mu r-2\mu^2r^2+O(r^3)$ with
$\mu:=b+1$.  Define
\[
v:=u-P_1-P_2,
~~\text{where}~~
P_1(x_1,x_2):=2\mu(x_1+x_2)
~~\text{and}~~
P_2(x_1,x_2):=-2\mu^2(x_1^2+x_2^2)
\]
are the first two elementary profiles satisfying $P_1(0,x_2)+P_2(0,x_2)
=
2\mu x_2-2\mu^2x_2^2$, and $\partial_{x_1}P_i=\partial_{x_2}P_i$
on $\{x_1=x_2\}$, $i=1,2$. Straightforward computation provides:
\begin{equation}\label{eq:v-corner-eq}
\mathcal L v=2\mu+O(r)
~~\text{near }0,
~~\text{and}~~
v(0,x_2)=O(x_2^3)
~~\text{as}~~
x_2\downarrow0,
~~
\partial_n v=0
~~\text{on}~~\Gamma_N.
\end{equation}
We next observe that
\begin{itemize}
\item in the sector $0<\alpha<\frac{\pi}{4}$, the artificial boundary \(\alpha=0\) corresponds to the diagonal \(x_1=x_2\),
\item the symmetry of \(u\) gives the homogeneous Neumann condition $\partial_\alpha v(r,0)=0$,
\item the side \(\alpha=\pi/4\) corresponds to \(x_1=0\), and by the boundary conditions in \eqref{eq:v-corner-eq} its Dirichlet datum is \(O(r^3)\). 
\end{itemize}
As $\mathcal L v=2\mu+O(r)$ by \eqref{eq:v-corner-eq}, we are in the context of Lemma \ref{lem:mixed-corner-expansion} below, which yields:
\begin{equation}\label{eq:v-expansion}
v(r,\alpha)
=
c_b\,r^2\log r\,\cos(2\alpha)
+
R(r,\alpha),
~~\text{with}~~
|\nabla R(r,\alpha)|\le C r
~~\text{for}~~
(r,\alpha)\in (0,r_0)\times\Big[0,\frac{\pi}{4}\Big],
\end{equation}
with $c_b:=\frac{4\mu}{\pi}>0$. By the last estimate on $\nabla R$, we get:
\begin{eqnarray}
(\partial_{x_2}-\partial_{x_1})u 
&=&
(\partial_{x_2}-\partial_{x_1})\left\{c_br^2\log r\,\cos(2\alpha)+R(r,\alpha)\right\}
\nonumber\\
&=&
2c_b(x_2-x_1)\left(-\log r+\frac{x_1x_2}{r^2}\right)
-4\mu^2(x_2-x_1)
+O(r),
\label{eq:diff-sign}
\end{eqnarray}
as \(r\downarrow0\), uniformly on closed subsectors of \(D\).  Now fix any angle $\theta_0\in\left(\frac{\pi}{4},\frac{\pi}{2}\right)$ and consider points
$(x_1,x_2)=r(\cos\theta_0,\sin\theta_0)$. Note that the leading term in \eqref{eq:diff-sign} is
$2c_b r(\sin\theta_0-\cos\theta_0)(-\log r)>0$ as \(r\downarrow0\), because
\(c_b>0\), and dominates all \(O(r)\) terms.  Therefore, along this ray,
\[
\partial_{x_2}u(r\cos\theta_0,r\sin\theta_0)
>
\partial_{x_1}u(r\cos\theta_0,r\sin\theta_0)
~~\text{for all sufficiently small}~~
r>0,
\]
which is the required contradiction to \eqref{eq:ass-ineq-b}. Since \(u\in C^2(E_2)\), the map $(\partial_{x_2}-\partial_{x_1})u$ is continuous, and we may then a nonempty
open subset \(O\subset D\) fulfilling the required strict inequality.
\ep

\begin{lemma}\label{lem:mixed-corner-expansion}
Let $f,g,v$ be maps on $S_\rho:=(0,\rho)\times(0,\frac{\pi}{4})$, $\rho>0$, satisfying for some \(a\in\mathbb R^2,\mu>0\):
\begin{eqnarray*}
&&\frac12\Delta v+a\cdot\nabla v
=2\mu+f,
~\text{on}~S_\rho,
~\text{with boundary conditions}~
\partial_\alpha v\big|_{\alpha=0}=0,
~\text{and}~
v\big|_{\alpha=\frac\pi{4}}=g(r),
\\
&&|f(r,\alpha)|\le Cr,
~~
|g(r)|+r|g'(r)|+r^2|g''(r)|\le Cr^3,
~~\text{as}~~r\downarrow0.
\end{eqnarray*}
Then there exist \(\rho_0\in(0,\rho)\), \(C>0\), and a remainder \(R\) such that
\[
v(r,\alpha)
=
\frac{4\mu}{\pi}r^2\log r\cos(2\alpha)+R(r,\alpha),
~\mbox{and}~
|\nabla R(r,\alpha)|\le Cr,
~(r,\alpha)\in S_{\rho_0},
\] where \(\nabla\) denotes the Cartesian gradient.
\end{lemma}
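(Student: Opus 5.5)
The plan is to build an explicit corner particular solution that absorbs the source $2\mu$ and produces the logarithmic term, and then to control what is left over with barrier and scaling estimates in the wedge. Write $v=w_0+z$. Here $w_0$ solves $\frac12\Delta w_0=2\mu$ on the wedge, with homogeneous Neumann data at $\alpha=0$ and zero Dirichlet data at $\alpha=\frac\pi4$.

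\textbf{Step 1: the corner solution.} For this wedge, with mixed Neumann/Dirichlet conditions, the separated harmonic solutions are $r^{\lambda}\cos(\lambda\alpha)$ with $\cos(\lambda\pi/4)=0$, i.e. $\lambda=2,6,10,\dots$ (and their negative counterparts). The exponent $\lambda=2$ coincides with the homogeneity of the particular solution $\mu r^2$ of $\Delta w=4\mu$, so there is a resonance and hence a logarithm. Concretely, $\mathrm{Re}(z^2\log z)=r^2(\log r\cos2\alpha-\alpha\sin2\alpha)$ is harmonic. Set
\[
w_0:=\mu r^2+c\,r^2\big(\log r\cos2\alpha-\alpha\sin 2\alpha\big).
\]
Then $\Delta w_0=4\mu$. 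Since $\partial_\alpha(\alpha\sin2\alpha)|_{\alpha=0}=0$, the Neumann condition holds at $\alpha=0$. At $\alpha=\pi/4$ the boundary value is $\mu r^2-c\frac\pi4 r^2$, which vanishes exactly when $c=\frac{4\mu}{\pi}$. The pieces $\mu r^2$ and $-c\,r^2\alpha\sin2\alpha$ are smooth quadratic-type terms whose Cartesian gradients are $O(r)$, so they go into $R$.

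\textbf{Step 2: the drift.} Because of the logarithm, $a\cdot\nabla w_0=O(r|\log r|)$, which is worse than the allowed $Cr$. I will correct for it by a further explicit term $w_1=O(r^3|\log r|^k)$. This is obtained by solving $\frac12\Delta w_1=-a\cdot\nabla w_0$ term by term with homogeneous data, which is possible since $3$ is not an eigen-exponent. The Dirichlet datum $g$ of order $r^3$ is lifted similarly, e.g. by $g(r)\,\chi(\alpha)$ with $\chi(\pi/4)=1$ and $\chi'(0)=0$; its Laplacian is $O(r)$ by the hypothesis on $g$. After these corrections the remainder $z:=v-w_0-w_1-g\chi$ satisfies
\[
\tfrac12\Delta z+a\cdot\nabla z=F,\qquad |F|\le Cr\,|\log r|^k ,
\]
with homogeneous Neumann/Dirichlet data on $S_\rho$ and $z$ bounded (since $v$ is).

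\textbf{Step 3: the remainder, which is the main obstacle.} Here I must show $|\nabla z|\le Cr$. First I get $|z|\le Cr^{2}$ by a comparison argument. The barrier is $\Psi=A\,r^{2-\epsilon}\cos\big((2-\epsilon)\alpha\big)+B\,r^{\beta}$-type, positive on the closed sector for small $\epsilon$, strictly superharmonic relative to the $r^{\lambda}$ modes, and dominating $|F|$ near $0$, with $\beta<2$ chosen slightly. Boundedness of $z$ excludes the singular modes $r^{-2},r^{-6},\dots$. The eigenvalue gap ($\lambda=2$ is the first admissible exponent) then gives $z=\gamma r^2\cos2\alpha+O(r^{3-\delta})$, via a Mellin/Kondrat'ev separation in $\alpha$ or by iterating the barrier. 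Finally, I convert the sup bound to a gradient bound by rescaling. On dyadic annuli $\{r/2<|x|<2r\}$, set $\tilde z(y)=z(ry)$. Then $\tilde z$ solves a uniformly elliptic equation with drift $r\,a$ and source $r^2F(ry)$. Boundary Schauder estimates for mixed Neumann/Dirichlet problems away from the corner (both sides are flat) give $|\nabla z|\le r^{-1}\big(\sup|z|+r^2\sup|F|\big)\le Cr$. Collecting $w_0$, $w_1$, the lift of $g$ and $z$ yields the stated expansion with $c=\frac{4\mu}{\pi}$. The delicate points are the barrier construction near the resonant exponent and ensuring that the log powers from Step 2 do not spoil the $O(r)$ gradient bound.
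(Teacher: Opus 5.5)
Your proposal is correct in its structure and gives the same coefficient $4\mu/\pi$, but you identify that coefficient differently from the paper. The paper cites Dauge's corner expansion. It reads off the log coefficient by projecting the source $2\mu$ onto the first angular eigenfunction $\cos(2\alpha)$ (projection $8\mu/\pi$) and dividing by $\frac12\Delta(r^2\log r\cos 2\alpha)=2\cos(2\alpha)$. It simply asserts that the drift, the datum $g$ and the higher modes only contribute to the remainder.

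You instead write down an exact corner solution $w_0=\mu r^2+c\,\mathrm{Re}(z^2\log z)$ of $\frac12\Delta w_0=2\mu$ satisfying both boundary conditions, so $c=4\mu/\pi$ is forced by the Dirichlet condition. You then remove the $O(r|\log r|)$ drift error with an explicit corrector $r^3(\log r\,\Psi_1(\alpha)+\Psi_2(\alpha))$, which is solvable since $9\neq(2+4k)^2$. This makes the resonant term and the non-logarithmic parts of $R$ (namely $\mu r^2$, $-c\,r^2\alpha\sin 2\alpha$, $\gamma r^2\cos 2\alpha$) completely explicit. You still need corner theory for the remainder, and that is exactly where the paper invokes Dauge.

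You also make explicit a hypothesis the statement omits: $v$ must be bounded near the corner. Otherwise, for $a=0$, the mode $r^{-2}\cos(2\alpha)$ can be added to $v$ without changing the equation or the boundary data, and the conclusion fails.

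Step 3 needs some repairs, though none is conceptual.

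\begin{itemize}
\item No comparison argument can give $|z|\le Cr^2$. For any $\Theta$ with $\Theta(\pi/4)\ge0$ and $\Theta'(0)\le0$ one has $\int_0^{\pi/4}(\Theta''+4\Theta)\cos(2\alpha)\,d\alpha=2\Theta(\pi/4)-\Theta'(0)\ge0$, so $r^2\Theta$ cannot be a strict supersolution; this is the resonance. Barriers give only $|z|\le Cr^s$ with $s<2$. The $O(r^2)$ bound must come from projecting onto $\cos(2\alpha)$ and solving the radial ODE (your Mellin/Kondrat'ev option), not from iterating the barrier.
\item The barrier as written does not work. $r^{2-\epsilon}\cos((2-\epsilon)\alpha)$ is harmonic, so nothing absorbs $a\cdot\nabla\Psi=O(r^{1-\epsilon})$, and $+Br^\beta$ with $B>0$ is subharmonic. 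Take $\Psi=r^{s}\cos(\kappa\alpha)$ with $s<\kappa<2$, for which $\frac12\Delta\Psi\le-c\,r^{s-2}$ dominates both the drift term and $|F|$. Add a Phragm\'en--Lindel\"of term $\delta r^{-s'}\cos(\kappa'\alpha)$, $0<s'<\kappa'<2$, to handle the corner.
\item Since only $|f|\le Cr$ is assumed, use boundary $W^{2,p}$ estimates ($p>2$) with Sobolev embedding on the rescaled annuli, rather than Schauder estimates. The scaling bound $|\nabla z|\le Cr^{-1}(\sup|z|+r^2\sup|F|)$ is unchanged.
\end{itemize}
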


\begin{proof}
This is the standard mixed Dirichlet--Neumann corner expansion for a uniformly
elliptic operator with smooth coefficients in a planar sector,    see Dauge
\cite[Chapter~4, Sections~13--14]{Dauge88}. We only identify explicitly the
coefficient of the first resonant term, since this is the only term used in the
proof of Theorem \ref{thm:counterexample-2d}.  We recall the
identification of the leading coefficient, since it is the only coefficient
needed below.  For the principal part \(\frac12\Delta\), the angular eigenfunctions associated
with the homogeneous mixed boundary conditions
\[
\Theta'(0)=0,
\qquad
\Theta(\omega)=0
\]
are
\[
e_k(\alpha)=\cos(\lambda_k\alpha),
\qquad
\lambda_k=\frac{\pi/2+k\pi}{\omega}=2+4k,
\qquad k=0,1,\ldots .
\]
Thus the first singular exponent is \(\lambda_0=2\), with eigenfunction
\(e_0(\alpha)=\cos(2\alpha)\). Since the right-hand side has constant leading
term \(2\mu\), its projection onto \(e_0\) is
\[
a_0
=
\frac{\int_0^\omega 2\mu\cos(2\alpha)\,\dd\alpha}
     {\int_0^\omega \cos^2(2\alpha)\,\dd\alpha}
=
\frac{\mu}{\pi/8}
=
\frac{8\mu}{\pi}.
\]
Moreover,
\[
\frac12\Delta\big(r^2\log r\cos(2\alpha)\big)
=
2\cos(2\alpha).
\]
Therefore the resonant coefficient multiplying
\(r^2\log r\cos(2\alpha)\) is \(a_0/2=4\mu/\pi\).

The lower-order term \(a_1\partial_{x_1}+a_2\partial_{x_2}\) does not change
this coefficient: applied to the resonant profile it produces only terms of
order \(r|\log r|\), which generate corrections of order \(r^3|\log r|\) after
solving the second-order equation. These corrections have Cartesian gradient
\(O(r)\). Similarly, the boundary datum \(g=O(r^3)\), together with its first two
derivative bounds, contributes only to the remainder with gradient \(O(r)\).
The remaining homogeneous modes are \(r^{2+4k}\cos((2+4k)\alpha)\), \(k\ge0\),
and their gradients are \(O(r)\) or smaller near the corner. This gives the
claimed expansion and the estimate on \(R\).\ep 
\end{proof}

\subsection{A capped dimension-lifting argument}

The next lemma is the capped version of the dimension-lifting argument. It is the form compatible
with the recursive trace at infinity from Proposition \ref{prop:recursive-trace-negative}.

\begin{lemma}\label{lem:capped-lifting}
Let $n\ge2$ and let $1\le r\le\kappa_n(b)\wedge[\kappa_{n+1}(b)-1]$. Then:
\[
w_R:=U^{n+1,r+1}(.,R)-1
\;\xrightarrow[R\to\infty]{}\;
U^{n,r}
~~\mbox{in}~~
C^2_{\rm loc}(E_n).
\]
\end{lemma}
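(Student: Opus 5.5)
The plan is to view $w_R$ as the restriction to the slice $\{z=0\}$ of a family of genuine solutions of the $(n+1)$-dimensional HJB equation. We then get pointwise convergence from the recursive trace at infinity and upgrade it to $C^2_{\rm loc}$ convergence by uniform interior elliptic estimates.

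\emph{Step 1 (pointwise and locally uniform convergence).} Since $r+1\le\kappa_{n+1}(b)$, Proposition \ref{prop:recursive-trace-negative} (ii), applied in dimension $n+1$ with cap $r+1$ and the single coordinate $z$ sent to infinity ($q=1$), gives
\[
\lim_{z\to\infty}U^{n+1,r+1}(x,z)=1+U^{n,r}(x),\qquad x\in E_n .
\]
So $w_R\to U^{n,r}$ pointwise on $E_n$. Since $U^{n+1,r+1}$ is componentwise nondecreasing, the convergence is monotone in $R$. The limit $U^{n,r}$ is continuous, by the coupling argument of Proposition \ref{prop:firststeps} for capped values. Dini's theorem then upgrades the convergence to locally uniform on $E_n$. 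In fact it is uniform on compacts of $\R_+^n$, since $U^{n+1,r+1}(\cdot,R)$ is continuous up to the boundary.

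\emph{Step 2 (lifting to $n+1$ variables and uniform bounds).} For $R\ge1$, define on $E_n\times(-R,\infty)$
\[
W_R(x,z):=U^{n+1,r+1}(x,z+R)-1 .
\]
Since the HJB equation \eqref{eq:hjb} is translation invariant, Theorem \ref{thm:HJB} (ii) shows that $W_R$ is a classical $C^2$ solution of \eqref{eq:hjb} in dimension $n+1$ on this domain. The family is bounded by $r+1$ uniformly in $R$. It is also uniformly Lipschitz with constant $2(n+1)(b+1)^+$, because the estimate of Proposition \ref{prop:firststeps} (ii) holds verbatim for capped values. By Step 1 and monotonicity in the last variable, $W_R\to U^{n,r}(x)$ locally uniformly on $E_n\times\R$, where the limit is viewed as a function constant in $z$.

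Now fix compacts $K\Subset K'\Subset E_n\times\R$; for $R$ large they lie in the domain of $W_R$. We repeat Step 2 of the proof of Theorem \ref{thm:HJB} with the constants tracked. First, $f_R:=-2b\mathbf 1^{n+1}\cdot\nabla W_R-2\max(\nabla W_R)$ is bounded in $L^\infty(K')$ uniformly in $R$. The Calder\'on--Zygmund estimate then bounds $\|W_R\|_{W^{2,p}}$ uniformly, and Sobolev embedding with $p>n+1$ bounds $\|W_R\|_{C^{1,\alpha}}$ uniformly. Since $q\mapsto\max(q)$ is $1$-Lipschitz, $f_R$ is bounded in $C^{0,\alpha}$ uniformly. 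Finally, Schauder estimates on a slightly smaller compact bound $\|W_R\|_{C^{2,\alpha}(K)}$ uniformly in $R$.

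\emph{Step 3 (identification and restriction).} By Arzel\`a--Ascoli, every sequence $R_k\to\infty$ has a subsequence along which $W_{R_k}$ converges in $C^2_{\rm loc}(E_n\times\R)$. By Step 2, the limit must be $U^{n,r}(x)$. The limit is therefore unique, and the whole family converges: $W_R\to U^{n,r}$ in $C^2_{\rm loc}(E_n\times\R)$. We then restrict to $z=0$. Since $w_R=W_R(\cdot,0)$, the $x$-derivatives of $w_R$ up to order two are the restrictions of the corresponding derivatives of $W_R$. Hence $w_R\to U^{n,r}$ in $C^2_{\rm loc}(E_n)$.

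As a by-product, the limit equation gives $\partial_z$-derivatives tending to $0$. Passing to the limit in \eqref{eq:hjb} then shows that $U^{n,r}$ solves the $n$-dimensional HJB equation, which is consistent with Theorem \ref{thm:HJB}.

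The main obstacle is Step 2, namely obtaining estimates uniform in $R$. One cannot work with $w_R$ directly in $n$ variables, since it does not solve an $n$-dimensional equation: the terms $\tfrac12\partial_{zz}$, $b\partial_z$ and the $z$-component inside $\max(\nabla\cdot)$ all interfere. This is why the translated $(n+1)$-dimensional family is introduced. The crucial input is the $R$-independent Lipschitz bound, which is what makes the nonlinear right-hand side uniformly bounded and then uniformly H\"older.
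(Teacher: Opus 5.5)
Your proposal is correct and follows essentially the same route as the paper. Both arguments translate in the last variable to obtain a family of $(n+1)$-dimensional HJB solutions, derive $R$-uniform interior $C^{2,\alpha}$ bounds by rerunning the Calder\'on--Zygmund/Sobolev/Schauder bootstrap, and identify the limit through the one-coordinate trace of Proposition~\ref{prop:recursive-trace-negative}; your explicit $R$-independent Lipschitz bound is what justifies the paper's claim that the constants do not depend on $R$. The remaining differences are cosmetic: you add a Dini step for locally uniform convergence and restrict to $\{z=0\}$ after passing to the limit, whereas the paper restricts the $C^{2,\alpha}$ bound to the slice first and needs only pointwise identification of the limit.
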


\begin{proof}
We prove the convergence on an arbitrary compact set $K\Subset E_n$. Choose bounded open sets
$O,O'\subset E_n$ such that $K\Subset O\Subset O'\Subset E_n$, and set $Q:=O\times(-\rho,\rho)$ and $Q':=O'\times(-2\rho,2\rho)$ for some fixed $\rho>0$. For $R>2\rho$, consider the translated functions
\[
u_R(x,s):=U^{n+1,r+1}(x,R+s)-1,
\qquad (x,s)\in Q'.
\]
Note that $(x,R+s)$ belongs to $E_{n+1}$ for every $(x,s)\in Q'$. Moreover,
by the HJB equation for $U^{n+1,r+1}$, the function $u_R$ solves
\[
\frac12(\Delta_xu_R+\partial_{ss}u_R)
+
b(\1^n\cdot\nabla_xu_R+\partial_su_R)
+
\max(\partial_{x_1}u_R,\ldots,\partial_{x_n}u_R,\partial_su_R)
=0
\qquad\mbox{in }Q'.
\]
The equation is translation-invariant in the last coordinate, and $1\le u_R\le r$ on $Q'$, because $0\le U^{n+1,r+1}\le r+1$. Applying the same interior regularity argument as in Step 2 of the proof of Theorem \ref{thm:HJB} on the fixed pair $Q\Subset Q'$, we obtain constants
$\alpha\in(0,1)$ and $C_K>0$, independent of $R$, such that $\|u_R\|_{C^{2,\alpha}(Q)}\le C_K,$ for $R>2\rho$. Restricting to the slice $\{s=0\}$ gives $\|w_R\|_{C^{2,\alpha}(K)}\le C_K$, $R>2\rho$. By the compact embedding $C^{2,\alpha}(K)\hookrightarrow C^2(K)$, this shows that the family $(w_R)_{R>2\rho}$ is relatively compact in $C^2(K)$.

It remains to identify the possible limit. Let $R_\ell\to\infty$ be a sequence such that that $w_{R_\ell}\to w$ in $C^2(K)$. By Proposition
\ref{prop:recursive-trace-negative}, applied in dimension $n+1$ with cap $r+1$, we have $w_{R_\ell}=U^{n,r}$, pointwise on $E_n$, implying that $w=U^{n,r}$ on $K$, independently of the choice of the converging sequence. This shows that $w_R\longrightarrow U^{n,r}$ in $C^2(K)$, and the required result follows from the arbitrariness of $K\Subset E_n$.\ep 
\end{proof}

\subsection{Failure beyond the two-dimensional setting}

We first introduce the following lemma. 

\begin{lemma}\label{lem:cap-one-failure}
Let $n\ge2$ and $b>-1$. Then, $\partial_{x_2}U^{n,1}>\partial_{x_1}U^{n,1}$ on some a nonempty open subset of $\{x\in E_n:x_1<\min_{j\ge2}x_j\}$.
\end{lemma}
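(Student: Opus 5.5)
We argue by contradiction and localize at a codimension-two edge of $E_n$, on which the two smallest coordinates vanish while the others stay at a fixed positive level. This reduces the situation to the two-dimensional corner analysis of Theorem \ref{thm:counterexample-2d}, with the remaining coordinates acting as parameters. Lemma \ref{lem:capped-lifting} is of no direct use here: it raises the cap, whereas sending a coordinate of $U^{n,1}$ to infinity only produces the trivial trace $1+U^{n-1,0}=1$ of Proposition \ref{prop:recursive-trace-negative}. So we work directly near the edge.

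Write $x=(x_1,x_2,x')$ with $x'=(x_3,\ldots,x_n)$, and fix a compact set $K'\Subset E_{n-2}$. Consider the region
\[
D_\delta:=\{x:\ 0<x_1<x_2<\delta,\ x'\in K'\},
\]
where $\delta<\min_{K'}\min(x')$. Every point of $D_\delta$ lies in $\{x_1<\min_{j\ge2}x_j\}$. Suppose the claim fails, so that $\partial_{x_1}U^{n,1}\ge\partial_{x_j}U^{n,1}$ on $D_\delta$ for all $j$. By symmetry it suffices to rule out $\partial_{x_2}U^{n,1}\le\partial_{x_1}U^{n,1}$ there, although the maximizer $\hat I=1$ is what linearizes the equation. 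Then $u:=U^{n,1}$ solves the linear equation
\[
\tfrac12\Delta u+b\,\mathbf 1^{\!n}\cdot\nabla u+\partial_{x_1}u=0 \quad\text{on } D_\delta .
\]
It satisfies the Neumann condition $(\partial_{x_2}-\partial_{x_1})u=0$ on the diagonal $\{x_1=x_2\}$, by symmetry and $C^1$ regularity. It also satisfies the Dirichlet condition $u(0,x_2,x')=U^{n-1,1}(x_2,x')$ from Theorem \ref{thm:HJB} (ii). By Theorem \ref{thm:HJB} applied in dimension $n-1$, this datum is $C^{2,\alpha}$ in $x'$ for $x_2>0$. At $x_2=0$ it equals $U^{n-2,1}(x')$, where we set $U^{1,1}=H_{b+1}$ when $n=3$ and read the datum as a constant when $n=2$.

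The next step is to expand the datum. We need $g(x_2,x')=m_0(x')+m_1(x')x_2+\tfrac12 m_2(x')x_2^2+O(x_2^3)$ with coefficients smooth in $x'$. This requires regularity of $U^{n-1,1}$ up to its face $\{x_2=0\}$ at interior points of that face. We would obtain it from standard boundary Schauder theory: the datum $U^{n-2,1}$ is smooth in the interior by induction, and the HJB Hamiltonian is Lipschitz. We then subtract a polynomial profile $P(x_1,x_2,x')=m_0(x')+m_1(x')(x_1+x_2)+\tfrac12 m_2(x')(x_1^2+x_2^2)$. It is symmetric in $(x_1,x_2)$, so it respects the Neumann condition, and it matches the Dirichlet datum up to $O(x_2^3)$. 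The remainder $v:=u-P$ then satisfies
\[
\mathcal L v=-\mathcal L P=:2\mu(x')+O(r),
\]
where $r$ is the polar radius in the $(x_1,x_2)$ plane. It has Neumann condition on the diagonal and Dirichlet datum $O(r^3)$ on $\{x_1=0\}$. Here the constant $2\mu(x')$ is built from $m_2$, $m_1$, and the $x'$-Laplacian and drift of $m_0$.

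Freezing $x'$, or more properly using the edge version of Dauge's corner asymptotics with $x'$ as tangential variables, Lemma \ref{lem:mixed-corner-expansion} yields
\[
v=\tfrac{4\mu(x')}{\pi}\,r^2\log r\cos(2\alpha)+R,\qquad |\nabla_{x_1,x_2}R|\le Cr .
\]
Exactly as in \eqref{eq:diff-sign}, this gives $(\partial_{x_2}-\partial_{x_1})u\sim \tfrac{8\mu(x')}{\pi}(x_2-x_1)(-\log r)$ along a fixed ray inside the sector. This contradicts the assumed inequality as soon as $\mu(x')>0$. Continuity of the second derivatives then upgrades the pointwise contradiction to a nonempty open set.

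The main obstacle is the sign of $\mu(x')$, which requires computing $-\mathcal L P$ at $r=0$. In two dimensions the constant term was $2\mu=2(b+1)>0$. Here we would use that the datum $U^{n-1,1}(x_2,x')$ itself solves the $(n-1)$-dimensional HJB equation in the interior. This should let us rewrite $\tfrac12 m_2+\tfrac12\Delta_{x'}m_0+b\,\mathbf 1\cdot\nabla_{x'}m_0$ in terms of the Hamiltonian term $\max(\partial_{x_2}U^{n-1,1},\nabla_{x'}U^{n-1,1})$ evaluated at $x_2=0$. The hope is that $\mu(x')=m_1(x')+\max(\ldots)-(\text{drift terms})$ simplifies to a positive multiple of $m_1(x')=\partial_{x_2}U^{n-1,1}(0,x')$. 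That quantity is positive because pushing the particle at level $x_2$ strictly increases the probability that at least one particle survives.

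If the edge uniformity in $x'$ of the expansion proves delicate, the fallback is to integrate the identity against a smooth bump in $x'$. This reduces everything to a genuinely two-dimensional problem with a positive averaged source, to which Lemma \ref{lem:mixed-corner-expansion} applies verbatim.
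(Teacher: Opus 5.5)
\textbf{The gap is the sign of $\mu(x')$.} You correctly flag it as the crux, but settle it only by a hope, and that hope is false for $n\ge3$. Carry out the computation you describe. Use the HJB equation of $U^{n-1,1}$ on its face $\{x_2=0\}$, granting $C^2$ regularity up to that face. Use also the HJB equation of $m_0=U^{n-2,1}$, which gives $\tfrac12\Delta_{x'}m_0+b\,\mathbf 1\cdot\nabla_{x'}m_0=-M$ with $M(x'):=\max_{j\ge3}\partial_{x_j}U^{n-2,1}(x')$. Together these give $\tfrac12 m_2=M-bm_1-\max(m_1,M)$, and hence
\[
2\mu(x')=-\mathcal LP(0,0,x')=-m_2-\tfrac12\Delta_{x'}m_0-b\,\mathbf 1\cdot\nabla_{x'}m_0-(2b+1)m_1=2\max(m_1,M)-m_1-M=|m_1(x')-M(x')|.
\]
For $n=2$, with $M:=0$, this gives $2(b+1)$, as in Theorem~\ref{thm:counterexample-2d}. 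For $n\ge3$, however, it is not a multiple of $m_1$, and the positivity of $m_1$ plays no role. Instead $\mu\ge0$ always, and $\mu(x')=0$ exactly when the normal derivative of $U^{n-1,1}$ on $\{x_2=0\}$ ties with its largest tangential derivative.

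Nothing in your argument excludes $m_1\equiv M$ on $K'$. For $n=3$ this would read $\partial_{x_2}U^{2,1}(0,x_3)=2(b+1)\mathrm{e}^{-2(b+1)x_3}$. In that case the $r^2\log r$ term is absent and you get no contradiction at this order. Excluding the tie is a strict comparison of derivatives of the lower-dimensional value function on its boundary face, which is a statement of the same type as the lemma itself. It cannot be read off Theorem~\ref{thm:counterexample-2d}: that corner expansion is derived under the contradiction hypothesis and is not an unconditional description of $U^{2,1}$.

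\textbf{The other analytic points are repairable.} Freezing $x'$ is not legitimate. Moreover, with only $C^{2,\alpha}$ regularity up to the face, $m_1$ and $m_2$ are not twice differentiable in $x'$, and the Dirichlet remainder is only $O(x_2^{2+\alpha})$. Your bump-averaging fallback does handle all of this. Since $|u-P|\le Cr$ uniformly, you can move the $x'$-derivatives onto a bump $\psi\ge0$. This yields a genuinely two-dimensional problem with source $\int|m_1-M|\psi\,\dd x'+O(r)$, and the averaged Dirichlet datum is $C^{2,1}$. But this only relocates the positivity question; it does not resolve it.

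\textbf{Comparison with the paper's route.} The paper does no higher-dimensional corner analysis. It proves the two-dimensional failure in every narrow sector $\{0<x_1<\delta x_2\}$. It then lifts from $n$ to $n+1$ at the finite face $x_{n+1}=\eta\downarrow0$, using $0\le U^{n+1,1}(x,\eta)-U^{n,1}(x)\le H_{b+1}(\eta)$ and $C^2_{\rm loc}$ compactness. You only considered lifting at infinity, where the cap-one trace is indeed trivial.

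If you follow the paper's route, be aware of one issue. The lifted point must satisfy $x_1^*<\eta$, so you need $C^1$ closeness at heights $\eta$ exceeding the distance $x_1^*$ to the face $\{x_1=0\}$. Convergence on a fixed compact $K\Subset E_n$ does not give this by itself; in the paper's text, $x^*$ is re-chosen after $K$ and $\eta_0$ have been fixed. That step therefore needs an additional uniformity argument.
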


\begin{proof}
Denote $D_n:=(\partial_{x_2}-\partial_{x_1})U^{n,1}$, $n\ge2$, and let us prove the  slightly stronger claim:
\begin{equation}\label{strengthened}
\text{for all}~\delta>0, 
~\text{there exists}~
O_{n,\delta}\subset
\Big\{
x\in E_n:\ 0<x_1<\delta\min_{2\le j\le n}x_j
\Big\}
~\text{such that}~
D_n>0~\text{on}~O_{n,\delta},
\end{equation}
where $O_{n,\delta}$ may be chosen to be open, by the regularity of $D_n$. 
We shall proceed by induction on $n$, and we need for this to establish the following property:
\begin{equation}\label{eq:finite-face-stability-cap-one}
U^{n+1,1}(\cdot,\eta)\xrightarrow[\eta\downarrow0]{} 
U^{n,1}
~\mbox{in}~C^2(K),
~~\text{for all}~
n\ge1
~\text{and all compact set}~
 K\Subset E_n.
\end{equation}
Indeed, by ignoring the last coordinate,
\(
U^{n+1,1}(x,\eta)\ge U^{n,1}(x),
\) for $x\in\R_+^m$, $\eta>0.$
Conversely, for any admissible strategy in dimension $n+1$, the restriction to the first $n$
coordinates is admissible in dimension $n$, and the last coordinate can survive with probability
at most $H_{b+1}(\eta)$. Hence, $0\le U^{n+1,1}(x,\eta)-U^{n,1}(x)\le H_\mu(\eta)$, implying that $U^{n+1,1}(\cdot,\eta)\to U^{n,1}$ locally uniformly on $E_n$ as $\eta\downarrow0$.
The convergence of derivatives follows by the same compactness argument as in
Lemma \ref{lem:capped-lifting}, applied now near the flat finite face
$\{x_{n+1}=0\}$. More precisely, the finite-boundary condition gives
\(
U^{n+1,1}(\cdot,0)=U^{n,1}
\) in $E_n$. 
On each half-cylinder $K'\times(0,\eta_0)$ with $K\Subset K'\Subset E_n$, boundary Schauder
estimates for the uniformly elliptic HJB equation give uniform $C^{2,\alpha}$ estimates in the
tangential variables. Hence, the family $U^{n+1,1}(\cdot,\eta)$ is relatively compact in $C^2(K)$ as
$\eta\downarrow0$, and the locally uniform convergence above identifies the only possible limit
as $U^{n,1}$. This proves \eqref{eq:finite-face-stability-cap-one}.

We now proceed with our induction argument starting from the claim \eqref{strengthened} for $n=2$. The proof of
Theorem \ref{thm:counterexample-2d} applies  to $U^{2,1}$. 
Indeed, the HJB equation is the same, the symmetry condition on the diagonal is the same, and the finite-boundary condition is $U^{2,1}(0,.)=U^1=H_{b+1}$.
Moreover, the local corner argument is local in every smaller sector $\left\{(x_1,x_2)\in E_2:\ 0<x_1<\delta x_2\right\}$, $\delta>0$.
If $D_2\le0$ throughout such a sector near the origin, then $U^{2,1}$ satisfies there the same linear equation used in the proof of Theorem \ref{thm:counterexample-2d}. The mixed Dirichlet--Neumann corner expansion then gives, along a ray
$x_1=r\cos\theta_0$, $x_2=r\sin\theta_0$ with $\frac{\cos\theta_0}{\sin\theta_0}<\delta$, the asymptotic inequality $D_2(r\cos\theta_0,r\sin\theta_0)>0$
for all sufficiently small $r>0$, a contradiction. Hence, for every $\delta>0$,
there are points with $0<x_1<\delta x_2$ at which $D_2>0$.

Assume now that the strengthened claim \eqref{strengthened} holds in dimension $n\ge2$. Fix $\delta'>0$ and let $\delta>\frac{2\delta'}{1-\delta'}$.
By the induction hypothesis, $D_n>0$ on some nonempty open set $O_{n,\delta'}\subset
\big\{
x\in E_n:\ 0<x_1<\delta'\min_{2\le j\le n}x_j
\big\}
$.
Choose $x^*\in O_{n,\delta'}$ and $\gamma>0$ such that
\(
D_n(x^*)\ge4\gamma.
\)
Shrinking the open set if necessary, we may choose a compact set
$K\Subset O_{n,\delta'}$ with $x^*$ in its interior and such that
\(
D_n(x)\ge3\gamma
\) for all $x\in K$.  
By \eqref{eq:finite-face-stability-cap-one}, there exists $\eta_0>0$ such that, for all
$0<\eta<\eta_0$ and $x\in K$,
\[
\left|
\partial_{x_i}U^{n+1,1}(x,\eta)-\partial_{x_i}U^{n,1}(x)
\right|<\gamma,
\qquad i=1,2.
\]
Consequently,
\[
D_{n+1}(x,\eta)
=
(\partial_{x_2}
-
\partial_{x_1})U^{n+1,1}(x,\eta)
\ge
D_n(x)-2\gamma
\ge
\gamma,
~~\text{for all}~x\in K
~\text{and}~0<\eta<\eta_0.
\]
Since the strengthened assertion in dimension $n$ gives failure points with the first coordinate arbitrarily small relative to the remaining coordinates, we choose $x^*\in K$ so that, in addition, $x_1^*<\frac12\delta'\eta_0$. Then we may choose $\eta$ satisfying $
\frac{x_1^*}{\delta'}<\eta<\eta_0$ and $\eta<\min_{2\le j\le n}x_j^*$.
Set
\(
\hat x:=(x^*,\eta)\in E_{n+1}.
\)
Then
\(
D_{n+1}(\hat x)>0.
\)
Moreover, since $x_1^*<\delta'\eta$, and since
$x^*\in O_{n,\delta'}$, it follows that $x_1^*<\delta'\min_{2\le j\le n}x_j^*$, and therefore
\[
\hat x_1
<
\delta'\min_{2\le j\le n+1}\hat x_j
<
\delta\min_{2\le j\le n+1}\hat x_j .
\]
Thus $\hat x$ belongs to the desired conical region in dimension $n+1$ and satisfies
$D_{n+1}(\hat x)>0$. By continuity of $\nabla U^{n+1,1}$ on $E_{n+1}$, the same strict inequality
holds on a nonempty open neighborhood of $\hat x$, and shrinking this neighborhood if necessary
keeps it inside $\big\{
x\in E_{n+1}:\ 0<x_1<\delta\min_{2\le j\le n+1}x_j
\big\}$.
\ep 
\end{proof}

\vspace{5mm}
\noindent {\bf Proof of \Cref{thm:conjecture12} (ii)} We disprove both conjectures in two steps.

\medskip

\noindent{\em Step 1}. We first show that Conjecture 2 fails for $U^n$, namely that $(\partial_{x_2}-\partial_{x_1})U^n>0$ on some subset of $\{x\in E_n:x_1<\min_{j\ge2}x_j\}$, which will be chosen to be open by the continuity of $\nabla U^n$.
By Proposition \ref{prop:recursive-trace-negative},
\(
U^n=U^{n,\kappa_n(b)}.
\) We distinguish two cases.

\medskip
\noindent
{\it Case 1: Suppose \(k:=\kappa_n(b)<n\).}
Then $m_0:=n-k+1\ge2$. By Lemma \ref{lem:cap-one-failure}, Conjecture 2 fails for $U^{m_0,1}$. Hence there exist
$x^{(0)}\in E_{m_0}$ and $\varepsilon>0$ such that
\[
x^{(0)}_1<\min_{2\le j\le m_0}x^{(0)}_j,
\qquad
\partial_{x_2}U^{m_0,1}(x^{(0)})
-
\partial_{x_1}U^{m_0,1}(x^{(0)})
\ge2\varepsilon .
\]
We now lift this inequality successively. For
\(
\ell=0,\ldots,k-2,
\)
set
\(
m_\ell:=m_0+\ell
\) and $
r_\ell:=1+\ell.$  Since $r_\ell+1\le k$ and $kb>-1$, we have
\(
r_\ell+1\le\kappa_{m_\ell+1}(b).
\)
Therefore Lemma \ref{lem:capped-lifting} applies and gives
\[
U^{m_\ell+1,r_\ell+1}(\cdot,R)-1
\longrightarrow
U^{m_\ell,r_\ell}
\qquad\mbox{in }C^2_{\rm loc}(E_{m_\ell}),
\qquad R\to\infty.
\]
Starting from $x^{(0)}$, choose inductively $R_{\ell+1}>0$ large enough and define
\(
x^{(\ell+1)}:=(x^{(\ell)},R_{\ell+1})\in E_{m_{\ell+1}}
\) for $\ell=0,\ldots,k-2$, 
so that
\[
\partial_{x_2}U^{m_{\ell+1},r_{\ell+1}}(x^{(\ell+1)})
-
\partial_{x_1}U^{m_{\ell+1},r_{\ell+1}}(x^{(\ell+1)})
\ge\varepsilon .
\]
We also choose each $R_{\ell+1}$ larger than $\max(x^{(\ell)})$. Thus the first coordinate remains
the unique smallest coordinate throughout the induction.
After $k-1$ lifting steps, we obtain a point
\(
\hat x:=x^{(k-1)}\in E_{m_0+k-1}=E_n
\)
such that
\(
\hat x_1<\min_{2\le j\le n}\hat x_j
\)
and
\(
\partial_{x_2}U^{n,k}(\hat x)
-
\partial_{x_1}U^{n,k}(\hat x)
\ge\varepsilon .
\)
Since $U^n=U^{n,k}$, this proves $\partial_{x_2}U^n(\hat x)>\partial_{x_1}U^n(\hat x)$, $\hat x_1=\min(\hat x)$.

\medskip
\noindent
{\it Case 2: \(\kappa_n(b)=n\).}
If $n=2$, the result is exactly Theorem \ref{thm:counterexample-2d}. For $n\ge3$, apply Theorem \ref{thm:counterexample-2d} in the full two-dimensional cap to obtain the existence of $x^{(0)}\in E_2$ and $\varepsilon>0$ such that
\[
x^{(0)}_1<x^{(0)}_2,
\qquad
\partial_{x_2}U^{2,2}(x^{(0)})
-
\partial_{x_1}U^{2,2}(x^{(0)})
\ge2\varepsilon .
\]
We now lift from $(2,2)$ to $(n,n)$. For
\(
\ell=0,\ldots,n-3,
\)
set
\(
m_\ell:=2+\ell,
\) $r_\ell:=2+\ell.$
As $b>-\frac1n$, we have $(m_\ell+1)b>-1$, $\ell=0,\ldots,n-3$, and therefore
\(
r_\ell+1=m_\ell+1\le\kappa_{m_\ell+1}(b).
\)
Lemma \ref{lem:capped-lifting} gives
\[
U^{m_\ell+1,r_\ell+1}(\cdot,R)-1
\longrightarrow
U^{m_\ell,r_\ell}
\qquad\mbox{in }C^2_{\rm loc}(E_{m_\ell}),
\qquad R\to\infty.
\]
As in the previous case, we choose $R_{\ell+1}$ inductively large enough and define
\(
x^{(\ell+1)}:=(x^{(\ell)},R_{\ell+1})
\)
so that
\(
\partial_{x_2}U^{m_{\ell+1},r_{\ell+1}}(x^{(\ell+1)})
-
\partial_{x_1}U^{m_{\ell+1},r_{\ell+1}}(x^{(\ell+1)})
\ge\varepsilon .
\)
Choosing also $R_{\ell+1}>\max(x^{(\ell)})$ at every step, the first coordinate remains the unique
smallest coordinate.

After $n-2$ dimensions have been reached, we obtain $\hat x\in E_n$ such that
\(
\hat x_1<\min_{2\le j\le n}\hat x_j
\)
and
\(
\partial_{x_2}U^{n,n}(\hat x)
-
\partial_{x_1}U^{n,n}(\hat x)
\ge\varepsilon .
\)
Since $k=n$ and hence $U^n=U^{n,n}$, this proves the desired strict inequality in the full-cap
case.

\medskip
Combining the two cases, there exists $\hat x\in E_n$ such that
\[
\hat x_1=\min(\hat x),
\qquad
\partial_{x_2}U^n(\hat x)>\partial_{x_1}U^n(\hat x).
\]
Finally,  by the continuity of 
\(
x\longmapsto
\partial_{x_2}U^n(x)-\partial_{x_1}U^n(x)
\),  the strict inequality persists on a sufficiently small neighborhood of
$\hat x$. Shrinking this neighborhood if necessary, we may keep it inside
\(
\{x\in E_n:\ x_1<\min_{j\ge2}x_j\}.
\)

\medskip
\noindent {\em Step 2}. We next show that Conjecture 1 fails for $U^n$. By the first step of the current proof, there exists a nonempty open set $O\subset E_n$ such
that, for every $y\in O$, the first coordinate is the unique smallest coordinate and
\(
\partial_{x_1}U^n(y)<\max(\nabla U^n(y)).
\)
Let $x\in O$ and $r>0$ such that $\overline B(x,r)\subset O$, consider the controlled process $X=(X^1,\ldots,X^n)$ under the push-the-laggard strategy
$\overline\Phi$, started from $x$, and define
\[
\theta:=\inf\{t\ge0:X_t\notin B(x,r)\}.
\]
As $U^n\!\in\! C^2(E_n)\cap C(\R_+^n)$ satisfies the HJB equation \eqref{eq:hjb}, and the push-the-laggard strategy allocates the whole budget to the first
smallest coordinate, we have \(
\frac12\Delta U^n(y)
+
b\1^n\cdot\nabla U^n(y)
+
\sum_{j=1}^n\overline\phi^j(y)\partial_{x_j}U^n(y)
<0$ for all $y\in O$. By continuity and compactness of $\overline B(x,r)$, 
\[
\frac12\Delta U^n(y)
+
[b\1^n+\overline\phi(y)]\cdot\nabla U^n(y)
\le -\varepsilon,
~~ y\in\overline B(x,r),
~~\mbox{for some}~~
\varepsilon>0.
\]
Applying It\^o's formula on $[0,\theta\wedge t]$, and taking expectations, we see that
\[
\E_x[U^n(X_{\theta\wedge t})]
=
U^n(x)
+
\E_x\Big[\int_0^{\theta\wedge t}
\Big(
\frac12\Delta U^n
+
(b\1^n+\overline\Phi_s)\cdot\nabla U^n
\Big)(X_s)\dd s \Big]
\le
U^n(x)-\varepsilon\E_x[\theta\wedge t].
\]
Suppose by contradiction that $\overline\Phi$ were optimal for $U^n$ at $x$, namely
\(
U^n(x)=
\E_x\big[\sum_{j=1}^n\mathds 1_{\{\tau_j^{\overline\Phi}=\infty\}}\big]
=
\E_x\big[
\E\big[
\sum_{j=1}^n\mathds 1_{\{\tau_j^{\overline\Phi}=\infty\}}
\big| X_{\theta\wedge t}
\big]
\big]
\le
\E_x[U^n(X_{\theta\wedge t})]
$ by the strong Markov property, which contradicts the last inequality $\theta>0$ almost surely. 
Therefore, the push-the-laggard strategy is not
optimal for $U^n$.
\ep

\section{Conjectures hold  for $V^n$}\label{sec:conjectureV}

\noindent{\bf Proof of Theorem \ref{thm:conjecture12} (i)} We prove the validity of both conjectures in two steps.

\medskip
\noindent {\em Step 1}. To prove Conjecture 2, it is enough to establish the following pairwise comparison:
\[
\partial_{x_i}V^n(x)\ge \partial_{x_j}V^n(x)
~~\text{for all}~~i\neq j
~\text{and}~
x\in E_n
 ~\text{with}~
 x_i<x_j.
\]
By symmetry, it is enough to prove the statement for the pair
\((i,j)=(1,2)\). We shall in fact prove the following stronger finite-difference
statement:
\begin{equation}\label{eq:finite-diff-comparison}
V^n(x_1+h,x_2,x_3,\dots,x_n)
\ge
V^n(x_1,x_2+h,x_3,\dots,x_n),
~\text{for all}~
h>0~\text{and}~x\in E_n~\mbox{with}~x_1<x_2.
\end{equation}
We prove \eqref{eq:finite-diff-comparison} by induction on the dimension.
For \(n=1\) there is nothing to prove. Assume that the finite-difference
comparison has been proved in all dimensions strictly smaller than \(n\), and
let us prove it in dimension \(n\). Define
$
D:=\{x\in E_n:\ x_1<x_2\}$, and define for fixed \(h>0\):
\[
u(x):=V^n(x_1+h,x_2,x_3,\dots,x_n),
~~
\widetilde u(x):=V^n(x_1,x_2+h,x_3,\dots,x_n),~~ w_h(x):=u(x)-\widetilde u(x),
~~
x\in D.
\]
Since \(V^n\in C^2(E_n)\) solves \eqref{eq:hjb}, both \(u\) and
\(\widetilde u\) solve the same HJB equation on \(D\). Then
\begin{equation}\label{eq:wh-raw}
\frac12\Delta w_h
+
b\sum_{i=1}^n\partial_{x_i}w_h
+
\bigl(\max(\nabla u)-\max(\nabla\widetilde u)\bigr)
=0
~~\text{on}~~D.
\end{equation}
For each \(x\in D\), by convexity of \(q\mapsto \max(q)\), there exists
\(\lambda(x)=(\lambda_1(x),\ldots,\lambda_n(x))\in[0,1]^n\) with
\(\sum_{k=1}^n\lambda_k(x)=1\) such that $\max(\nabla u(x))-\max(\nabla\widetilde u(x))
=
\sum_{k=1}^n\lambda_k(x)\partial_{x_k}w_h(x)$.
Consequently,
 \(w_h\) satisfies the linear uniformly elliptic equation with bounded coefficients:
\[
Lw_h=0
~\text{on}~D,
~~\text{where}~~
L\phi
:=
\frac12\Delta\phi+\sum_{k=1}^n c_k(x)\partial_{x_k}\phi,
~~
c_k(x):=b+\lambda_k(x)\in[b,b+1].
\]
We next verify that \(w_h\) is nonnegative on the boundary of \(D\), including
the boundary at infinity. 
\begin{enumerate}
\item[{\rm (1.a)}] On the finite boundary, the verification is direct:
\begin{itemize} 
\item if \(x_1=x_2=z>0\), then by symmetry of \(V^n\) in the first two
coordinates,
\[
w_h(z,z,x^{-\{1,2\}})
=
V^n(z+h,z,x^{-\{1,2\}})
-
V^n(z,z+h,x^{\{1,2\}})
=0.
\]
\item if \(x_1=0\), then $
w_h(0,x^{-1})
=
V^n(h,x^{-1})
-
V^n(0,x_2+h,x^{-\{1,2\}})
=
V^n(h,x^{-1})
\ge0.$
\item Finally, if \(x_k=0\) for
some \(k\ge3\), then $w_h(x)=0$.
Hence
\begin{equation}\label{eq:wh-finite-boundary}
w_h\ge0
~~\text{on}~~\partial D.
\end{equation}
\end{itemize}
\item[{\rm (1.b)}] We next examine the behavior at infinity. By Theorem \ref{thm:HJB}, \(V^n\)
has the recursive boundary trace at infinity. Thus, \(V^n\), and hence \(w_h\),
extends continuously to the componentwise compactification
\([0,\infty]^n\), with the convention that sending one coordinate to
\(\infty\) removes that coordinate from the all-survive value function.  Namely,  we claim that:
\begin{equation}\label{eq:wh-infinity}
\liminf_{\substack{x\in D\\ |x|\to\infty}} w_h(x)\ge0.
\end{equation}
Let \(x^m\in D\) be any sequence escaping every compact subset of \(D\). Up to a
subsequence, \(x^m\) converges componentwise in \([0,\infty]^n\). If the limit
belongs to the finite boundary of \(D\), then the limiting value of \(w_h\) is
nonnegative by \eqref{eq:wh-finite-boundary}. Otherwise at least one coordinate
tends to \(\infty\). We distinguish the following cases.
\begin{itemize}
\item If \(x_2^m\to\infty\) while \(x_1^m\) stays finite, then the second coordinate
is removed in both terms in the limit, and it follows from the componentwise monotonicity of \(V^{n-1}\) that
\[
\lim_{m\to\infty}w_h(x^m)
=
V^{n-1}(x_1+h,x^{-\{1,2\}})
-
V^{n-1}(x_1,x^{-\{1,2\}})\ge0,
\]
\item If \(x_1^m\to\infty\), then,
because \(x_1^m<x_2^m\), also \(x_2^m\to\infty\), and the limiting value of \(w_h\) is \(0\).  
\item It remains to consider the case where \(x_1^m\) and \(x_2^m\) stay finite, while
at least one coordinate among \(x_3^m,\dots,x_n^m\) tends to \(\infty\). Removing
all coordinates which tend to \(\infty\), the limiting value of \(w_h\) is either
zero because a finite coordinate of the reduced vector lies on the absorbing
boundary, or else it is exactly a lower-dimensional finite difference of the
form
\[
V^k(y_1+h,y_2,y^{-\{1,2\}})
-
V^k(y_1,y_2+h,y^{-\{1,2\}}),
\qquad y_1<y_2,
\]
for some \(k<n\). By the induction hypothesis, this quantity is nonnegative.
\end{itemize}
\end{enumerate}
We now conclude by applying the weak maximum principle on bounded truncations,  see Gilbarg--Trudinger \cite[Theorem~3.1]{GT}.  We apply this result on the bounded domains
\(
D_R:=D\cap B_R,
\) for $R>0$,  where $B_R:=\{x\in E_n:\,  |x|<R\}$.
Let \(\eta>0\). By the boundary condition \eqref{eq:wh-infinity}, 
\[
w_h(x)\ge -\eta,~ 
 x\in D,~ |x|\ge R_\eta,  \mbox{ for some } R_\eta>0.
\]
Fix \(R>R_\eta\). On the finite part of \(\partial D_R\), we have \(w_h\ge-\eta\) by
\eqref{eq:wh-finite-boundary}; on the artificial boundary \(\partial B_R\cap D\), we have
\(w_h\ge-\eta\) by the choice of \(R_\eta\). Hence
 the
weak maximum principle gives
\(
w_h\ge - \eta
\) in $D_R$.   
Letting first \(R\to\infty\) and then \(\eta\downarrow0\), we obtain
\(
w_h\ge0\) in $D$.

\medskip
\noindent
{\em Step 2}. We now show that Conjecture 1 holds for $V^n$. Let $X=(X^1,\ldots,X^n)$ be the controlled process corresponding to the push-the-laggard strategy $\overline{\Phi}$,    starting from $x\in E_n$.  Define
\[
\tau:=\min_{1\le i\le n}\tau_i,
\qquad
\sigma_K:=\inf\{t\ge 0:\ \max(X_t)\ge K\},
\qquad K>0.
\]
Applying It\^o's formula to $V^n(X_s)$ on $[0,\tau\wedge\sigma_K\wedge t]$ yields
\begin{align*}
V^n(X_{\tau\wedge\sigma_K\wedge t})
&=V^n(x)+\int_0^{\tau\wedge\sigma_K\wedge t}
\Big(\frac12\Delta V^n+(b\mathbf 1^{\!n}+\overline{\Phi}_s)\cdot \nabla V^n\Big)(X_s)\,\dd s+\int_0^{\tau\wedge\sigma_K\wedge t}\nabla V^n(X_s)\cdot\dd W_s \\
&= V^n(x) +\int_0^{\tau\wedge\sigma_K\wedge t}\nabla V^n(X_s)\cdot\dd W_s,  
\end{align*}
as $V^n$ solves \eqref{eq:hjb}. Therefore,  taking expectations in both sides yields $V^n(x)= \mathbb E_x\big[V^n(X_{\tau\wedge\sigma_K\wedge t})\big]$. Letting $K\longrightarrow \infty$ first, and then $t\to\infty$,  we deduce from  the dominated convergence  theorem that 
\[
V^n(x)= \mathbb E_x\big[V^n(X_{\tau})\,\mathds 1_{\{\tau<\infty\}}+\lim_{t\to\infty}V^n(X_t)\,\mathds 1_{\{\tau=\infty\}}\big]= \mathbb E_x\big[\lim_{t\to\infty}V^n(X_t)\,\mathds 1_{\{\tau=\infty\}}\big].
\]
On $\{\tau=\infty\}$,  one has $\{\tau_i=\infty\}$ for $i=1,\ldots, n$,   and thus 
$$\limsup_{t\to\infty} X^i_t =\lim_{t\to\infty} X^i_t=\infty,
~~\text{and therefore}~~
\lim_{t\to\infty}V^n(X_t)=1.$$
Hence, $V^n(x)= \mathbb E_x\big[\mathds 1_{\{\tau=\infty\}}\big]$, which proves the optimality of $\overline\Phi$ for $V^n$. 
\qed 

\appendix

\section{Appendix}

\begin{lemma}\label{lem:preparation-remote}
Let $B$ be a Brownian motion, and $\tau_Y:=\inf\{t\ge0:\ Y_t\le0\}$ where $Y_t:=y+\int_0^t\beta_s\,\dd s+B_t$, $ t\ge 0$, for some $y>0$ and some bounded progressively measurable process $\beta$.
Then, for every fixed $R>0$,
\begin{eqnarray}
&\lim_{t\to\infty}Y_t=\infty
~~\mbox{on }\{\tau_Y=\infty\},
~~\lim\limits_{t\to\infty}
\mathds 1_{\{\tau_Y>t,\ Y_t\ge R\}}
= \mathds 1_{\{\tau_Y=\infty\}},
~\mbox{a.s.} &
\label{eq:fixed-R-preparation}
\\
&\mbox{and}~
\lim\limits_{y\to\infty}
\P\left[\tau_Y>\sqrt{y},~Y_{\sqrt{y}}\ge R
\right]
= 1
~~\mbox{if the process $\beta$ is constant.}&
\label{eq:remote-coordinate-estimate-general}
\end{eqnarray}
\end{lemma}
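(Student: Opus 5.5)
The statement to prove is Lemma~\ref{lem:preparation-remote}. The plan is to compare $Y$ pathwise with Brownian motions with constant drift. Let $|\beta|\le M$. For the first claim in \eqref{eq:fixed-R-preparation}, the key step is a \emph{return} estimate: from any level $R$, a process with drift bounded below by $-M$ returns to $0$ from level $2R$ with probability bounded below uniformly. I will show that on $\{\tau_Y=\infty\}$ one cannot have $\liminf_t Y_t<\infty$.

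Fix $R$ and define successive stopping times.
\begin{itemize}
\item $\sigma_1:=\inf\{t:Y_t\le R\}$, then $\rho_1:=\inf\{t>\sigma_1:Y_t\ge 2R\}$, and so on alternately.
\item At each $\sigma_k<\infty$, $Y$ sits at level $R$. Since $Y_t\le R+M(t-\sigma_k)+(B_t-B_{\sigma_k})$ up to the next exit, the strong Markov property and comparison give a conditional probability at least $p(R,M)>0$ that $Y$ hits $0$ within time $1$ before reaching $2R$.
\item Iterating (a conditional Borel--Cantelli/Lévy 0--1 argument), the event that $Y$ crosses down to $R$ infinitely often while never hitting $0$ has probability zero.
\item Similarly, staying in $[0,2R]$ forever without hitting $0$ has probability zero, since in each unit time interval there is a uniform positive chance to hit $0$.
\end{itemize}
Hence on $\{\tau_Y=\infty\}$ we have $Y_t>R$ eventually, for every $R$, which is $Y_t\to\infty$.

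The second claim in \eqref{eq:fixed-R-preparation} follows at once. On $\{\tau_Y=\infty\}$ the indicator is eventually $1$. On $\{\tau_Y<\infty\}$ it is $0$ for $t\ge\tau_Y$.

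For \eqref{eq:remote-coordinate-estimate-general}, with $\beta\equiv b$ constant, bound the complement by a union bound:
\[
\P\big[\tau_Y\le\sqrt y\big]+\P\big[Y_{\sqrt y}<R\big].
\]
\begin{itemize}
\item The first term is at most $\P[\min_{s\le\sqrt y}B_s\le -y+|b|\sqrt y]$. By the reflection principle this equals $2\P[B_{\sqrt y}\le -y+|b|\sqrt y]$, and since $(y-|b|\sqrt y)/y^{1/4}\to\infty$, it tends to $0$.
\item The second term is $\P[B_{\sqrt y}<R-y-b\sqrt y]$, which tends to $0$ for the same reason.
\end{itemize}

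The main obstacle is the first step. One must justify rigorously that infinitely many independent-ish trials, each with uniformly positive chance of killing, force absorption despite the adapted drift. I would first handle this via the conditional second Borel--Cantelli lemma (Lévy's extension) applied to the filtration at the times $\sigma_k$.
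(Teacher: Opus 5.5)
Your proposal is correct and is essentially the paper's argument: a uniform positive probability of absorption within one time unit from a bounded level, obtained by comparing $Y$ after a stopping time $\sigma$ with $M(t-\sigma)+B_t-B_\sigma$ (the upper bound $\beta\le M$ is what matters here, not the lower bound mentioned in your opening sentence), is iterated along stopping times, and the same union bound with Brownian scaling gives \eqref{eq:remote-coordinate-estimate-general}. The only difference is bookkeeping: the paper uses returns to $(0,K]$ spaced at least one time unit apart, $\theta_{m+1}:=\inf\{t\ge 1+\theta_m:\ Y_t\le K\}$, which covers your separate ``stuck in $(0,2R)$'' case automatically and gives the elementary geometric bound $\P[\theta_{m+1}<\tau_Y]\le(1-p_K)\,\P[\theta_m<\tau_Y]$ in place of a conditional Borel--Cantelli argument.
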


\begin{proof}
{\rm (i)} We first show the first part in \eqref{eq:fixed-R-preparation}.
Let $\theta$ be an arbitrary stopping time, and consider the event set $F:=\{\theta<\tau_Y,Y_\theta\le K\}\in\Fc_\theta$, for some $K>0$. Since the process $\beta$ is bounded, we have $\inf_{0\le s\le 1}Y_{\theta+s}\le Y_\theta+\|\beta\|_\infty+\inf_{0\le s\le 1}(B_{\theta+s}-B_\theta)$, implying with $\bar K:=K+\|\beta\|_\infty$ that
$$
F\cap\{\inf_{0\le s\le 1}(B_{\theta+s}-B_\theta)\le-\bar K\}
\subset
F\cap\{\inf_{0\le s\le 1}Y_{\theta+s}\le K\}
\subset
F\cap\{\tau_Y\le\theta+1\}.
$$
Then as $F\in\Fc_\theta$, it follows from the independence of the Brownian motion increments that
\begin{equation}\label{eq:one-step-estimate-preparation}
\P\big[F\cap\{\tau_Y\le\theta+1\}\big]
\ge
\P\Big[F\cap\Big\{\inf_{0\le s\le 1}(B_{\theta+s}-B_\theta)\le-\bar K\Big\}\Big]
=p_K\P[F],~~p_K:=\P\Big[\inf_{0\le s\le 1}B_s\le-\bar K\Big].
\end{equation}
Now define $A_K:=\{\tau_Y=\infty\}\cap\big\{\liminf_{t\to\infty}Y_t\le\frac{K}2\big\}$, and consider the stopping times
$$
\theta_1:=\inf\{t\ge0:\ Y_t\le K\},
~\theta_{m+1}:=\inf\{t\ge 1+\theta_m:\ Y_t\le K\},~m\ge 1.
$$
On the event set $A_K$, the process never hits $0$, and it visits $(0,K]$ infinitely many times. Then \(
A_K\subseteq\{\theta_m<\tau_Y\}$ and therefore $\P[A_K]\le\P[\theta_m<\tau_Y]$, for all $m\ge1$. Since $\theta_{m+1}\ge\theta_m+1$, we have
$
\{\theta_{m+1}<\tau_Y\}
\subset
\{\theta_m<\tau_Y\}\cap\{\tau_Y>\theta_m+1\}$, and then
\begin{align*}
\P[\theta_{m+1}<\tau_Y]
\le
\P\big[
\{\theta_m<\tau_Y\}\cap\{\tau_Y>\theta_m+1\}
\big]
&=
\P[\theta_m<\tau_Y]
-
\P\big[
\{\theta_m<\tau_Y\}\cap\{\tau_Y\le\theta_m+1\}
\big]  
\\
&\le
(1-p_K)\P[\theta_m<\tau_Y],
\end{align*}
where we applied the estimate
\eqref{eq:one-step-estimate-preparation} with $\theta=\theta_m$, together with the observation that $Y_{\theta_m}\le K$ on $\{\theta_m<\tau_Y\}$. Since $A_K\subseteq\{\sigma_m<\tau_Y\}$ for every $m\ge1$, we obtain
$
\P[A_K]\le(1-p_K)^{m-1}\longrightarrow0$ as $m\to\infty$,  which yields $\P[A_K]=0$. As $
\{\tau_Y=\infty\}
\cap
\left\{
\liminf_{t\to\infty}Y_t<\infty
\right\}
\subset
\bigcup_{K=1}^\infty A_K$, this yields
\[
\P\left[
\{\tau_Y=\infty\}
\cap
\left\{
\liminf_{t\to\infty}Y_t<\infty
\right\}
\right]=0
~\mbox{and then}~
\lim_{t\to\infty}Y_t=\infty,~\text{a.s. on}~\{\tau_Y=\infty\},
\]
which is the first part in \eqref{eq:fixed-R-preparation}. The second part follows immediately as this implies that
\begin{eqnarray*}
\mathds 1_{\{\tau_Y>t,Y_t\ge R\}}
&=&
\mathds 1_{\{\tau_Y=\infty\}\cap\{\tau_Y>t,Y_t\ge R\}}
+\mathds 1_{\{\tau_Y<\infty\}\cap\{\tau_Y>t,Y_t\ge R\}}
\\
&=&
\mathds 1_{\{\tau_Y=\infty\}\cap\{Y_t\ge R\}}
+\mathds 1_{\{\tau_Y<\infty\}\cap\{\tau_Y>t,Y_t\ge R\}}
\longrightarrow
\mathds 1_{\{\tau_Y=\infty\}},~~\mbox{a.s. for all}~R>0.
\end{eqnarray*}

\noindent  {\rm (ii)} We next prove \eqref{eq:remote-coordinate-estimate-general}. Let $F_y:=
\big\{\tau_Y>\sqrt{y}\}\cap\{Y_{\sqrt{y}}\ge R\big\}$, and observe that
\[
\P\big[(F_y)^c\big]
\le
\P\big[D_y\big] + \P\big[E_y\big],
~\text{where}~
D_y:=
\big\{\tau_Y\le\sqrt{y}\big\}
~\mbox{and}~ 
E_y:=
\big\{Y_{\sqrt{y}}<R\big\}.
\]
By Brownian scaling,
$$
\P[D_y]\le \P\left[
\inf_{0\le t\le \sqrt{y}}B_t\le -y+|\alpha|\sqrt{y}
\right]=
\P\left[
\inf_{0\le t\le1}B_t
\le
\frac{-y+|\alpha|\sqrt{y}}{y^{1/4}}
\right] 
\longrightarrow 0,
$$
and similarly,
\[
\P[E_y]
\le
\P\big[B_{\sqrt{y}}<R-y-\alpha \sqrt{y}\big]
\le
\P\Big[B_1<\frac{R-y+|\alpha|\sqrt{y}}{y^{1/4}}
\Big]
\longrightarrow 0,
\qquad\mbox{as }y\to\infty.
\]
Hence $\P[F_y]\longrightarrow 1$ as $y\to\infty$ as claimed in \eqref{eq:remote-coordinate-estimate-general}.
\ep
\end{proof}

\bibliographystyle{plain}
\bibliography{unique}

\end{document}